\theoremstyle{plain}
\newtheorem{definition}{Definition}[section]
\newtheorem{theorem}{Theorem}[section]
\newtheorem{lemma}{Lemma}[section]
\newtheorem{proposition}{Proposition}[section]
\newtheorem{remark}{Remark}[section]
\theoremstyle{definition}
\newcommand{\opn}{\operatorname}
\numberwithin{equation}{section}
\begin{document}

\title{Connections in holomorphic Lie algebroids}
\author{Alexandru IONESCU and Gheorghe MUNTEANU}
\date{}
\maketitle

\begin{abstract}
The main purpose of this note is the study of the total space of a holomorphic
Lie algebroid $E$. The paper is structured in three parts.

In the first section we briefly introduce basic notions on holomorphic Lie
algebroids. The local expressions are written and the complexified
holomorphic bundle is introduced.

The second section is a little broader and includes two approaches to study
the geometry of complex manifold $E.$ The first part contains the study of the tangent bundle $T_{C}E=T^{\prime }E\oplus T^{\prime \prime }E$ and its link, via tangent anchor map, with the complexified tangent bundle $%
T_{C}(T^{\prime }M)=T^{\prime }(T^{\prime }M)\oplus T^{\prime \prime
}(T^{\prime }M).$ A holomorphic Lie algebroid structure has been emphasized
on $T^{\prime }E.$  A special study is made for integral curves of a spray on $%
T^{\prime }E.$ Theorem 2.1 gives the coefficients of a spray, called
canonical, according to a complex Lagrangian on $T^{\prime }E.$
In the second part of section two we study the prolongation $\mathcal{T}%
^{\prime }E$ of $E\times T^{\prime }E$ algebroid structure. 

In the third section we study how a
complex Lagrange (Finsler) structure on $T^{\prime }M$ induces a Lagrangian
structure on $E.$ Three particular cases are analyzed by the rank of anchor
map, the dimensions of manifold $M$ and the fibre dimension. We obtain the
correspondent on $E$ of the well-known (\cite{Mu}) Chern-Lagrange nonlinear connection
from $T^{\prime }M$.

2000 \emph{Mathematics Subject Classification:} 17B66, 53B40.

\emph{Key words:} Holomorphic Lie algebroid, anchor map, spray, nonlinear connection, prolongation,  Lagrangian structures.
\end{abstract}

\section*{Introduction}

Lie algebroids are a generalization of Lie algebras and vector bundles. They
are anchored vector bundles with a Lie bracket defined on the modules of
sections induced from tangent bundle. Lie algebroids provide a natural
setting in which one can develop the theory of differential operators such
as the exterior derivative of forms and the Lie derivative with respect to a
vector field. This setting is slightly more general than that of the tangent
and cotangent bundles of a smooth manifold and their exterior powers.

Lie algebroids represent an active domain of research, with applications in
many areas of mathematics and physics. A well-known example is the work of
A. Weinstein \cite{W} in the area of Mechanics, who developed a generalized
theory of Lagrangians on Lie algebroids and obtained the Euler-Lagrange
equations using the structure of the dual of Lie algebroids and Legendre
transformations associated with a regular Lagrangian. On the other hand, E.
Martinez \cite{Ma1,Ma2} developed another approach using the notion of
prolongation of Lie algebroid, for that fundamental notions on tangent
bundle geometry, such as spray theory and nonlinear connection, can be
transferred to this prolongation. Many recently results are obtained on Lie
algebroids ( \cite{A1,A2,P1,P2, P} etc.).

In complex geometry, some properties of complex and holomorphic Lie
algebroids have been studied in \cite{W1,L-S-X,I-Po}.

The present paper analyzes specific notions from real Lie algebroids theory
in the case of holomorphic Lie algebroids. The paper is organized as follows.
The first part gives basic definitions of a holomorphic anchor map,
holomorphic Lie algebroid, Lie bracket on such an algebroid, which are also locally characterized. More details and linear connections on $E$
and $E_{C}$ are presented in \cite{I}.

In the geometry of the manifold holomorphic Lie algebroid $E$, two
approaches are taken into account. One is of the tangent bundle $T^{\prime }E$,
which has in its turn a natural structure of Lie algebroid. The geometry of $%
T^{\prime }E$ is "linearized" by using a nonlinear connection for which, in
respect to its adapted frames, we study a distinguished complex linear
connection. The classical notions of semisprays and sprays are defined in
this case following the variational problem on an algebroid endowed with a
Lagrangian structure. The main results is Theorem 2.1, which gives the coefficients of a spray, called canonical, from the variational problem.

The second approach concerns the prolongation $\mathcal{T}%
^{\prime}E$ of a holomorphic Lie
algebroid. Using a complete lift we introduce the Liouville tensor and an almost tangent structure for
defining a different type of nonlinear connection on the prolongation $\mathcal{T}%
^{\prime}E$. It is proved how the nonlinear connection on $%
T^{\prime}E$ defines a nonlinear connection on $\mathcal{T}^{\prime}E$. Theorem 2.2 gives the procedure of deriving a nonlinear connection on $\mathcal{T%
}^{\prime}E$ from a spray on $\mathcal{T}^{\prime}E$.  Corroborate with Theorem 2.1, we can say that we have solved the
problem of determining of adapted frames, and so of "linearizing" of the
geometry of a holomorphic algebroid endowed with a regular Lagrangian $L.$

In the last section we study the possibility of inducing Lagrange structures
on holomorphic Lie algebroids from a Lagrangian structure on the tangent
bundle $T^{\prime }M$. Three particular cases are analyzed by the rank of
anchor map and dimensions of manifold $M$ and fiber dimension. It is proved
that a Lagrangian on $T^{\prime }M$ and the well known Chern-Lagrange
nonlinear connection on $T^{\prime }M$ induces a Lagrangian
structure on $\mathcal{T}^{\prime }E$ and consequently, by Theorems 2.1 and
2.2, a nonlinear connection on $\mathcal{T}^{\prime }E$.

\section{Holomorphic Lie algebroids}

\label{S1}

Let $M$ be a complex $n$-dimensional manifold and $E$ a holomorphic vector
bundle of rank $m$ over $M$. Denote by $\pi:E\rightarrow M$ the holomorphic
bundle projection, by $\Gamma(E)$ the module of holomorphic sections of $\pi$
and let $T_\mathbb{C}M = T^{\prime }M\oplus T^{\prime \prime }M$ be the
complexified tangent bundle of $M$, split into the holomorphic and
antiholomorphic tangent bundles.

On a vector bundle $\left( E,\pi ,M\right) $ the definition of a derivative
law is $D:\chi (M)\times \Gamma (E)\rightarrow \Gamma (E)$, $D_{X}s$, such
that $D_{fX}s=fD_{X}s$ and $D_{X}(fs)=fD_{X}s+X(f)$. While these notions
make sense on the fibers of $E$, the Lie bracket $[s_{1},s_{2}]f$, where $%
s_{1},s_{2}\in \Gamma (E)$, has no mathematical meaning. Hence the notion of
Lie algebroids.

\begin{definition}
The holomorphic vector bundle $E$ over $M$ is called anchored if there
exists a holomorphic vector bundle morphism $\rho:E\rightarrow T^{\prime }M$%
, called anchor map.
\end{definition}

Denote by $\Gamma(T^{\prime }M)$ the module of holomorphic sections of $%
T^\prime M$, that is, the holomorphic vector fields on $M$, and by $\mathcal{%
H}(M)$ the ring of holomorphic functions on $M$.

Using the anchor map, we can define a Lie bracket on $E$ from the Lie
bracket on $T^{\prime }M$ by 
\begin{equation}  \label{0}
\rho_E([s_1,s_2]_E) = [\rho_E(s_1),\rho_E(s_2)]_{T^{\prime}M},
\end{equation}
$s_1,s_2\in \Gamma(E)$. For any $f\in \mathcal{H}(M)$, 
\begin{multline*}
\rho_{E}[s_1,fs_2]_{E}=[\rho_{E}(s_1),\rho_{E}(fs_2)]_{T^{\prime}M}=[%
\rho_{E}(s_1),f\rho_{E}(s_2)]_{T^{\prime }M}= \\
=f[\rho_{E}(s_1),\rho_{E}(s_2)]_{T^{\prime }M}+\rho_E (s_1)(f)\rho_E (s_2).
\end{multline*}

These considerations lead to the following definition (\cite%
{W,L-S-X,I-Po,Ma1}):

\begin{definition}
A holomorphic Lie algebroid over $M$ is a triple $(E,[\cdot,\cdot]_E,\rho_E)$%
, where $E$ is a holomorphic vector bundle anchored over $M$, $%
[\cdot,\cdot]_E$ is a Lie bracket on $\Gamma(E)$ and $\rho_E:\Gamma(E)%
\rightarrow\Gamma(T^{\prime }M)$ is the homomorphism of complex modules
induced by the anchor map $\rho$ such that
\begin{equation}  \label{1}
[s_1,fs_2]_E = f[s_1,s_2]_E + \rho_E(s_1)(f)s_2
\end{equation}
for all $s_1,s_2\in\Gamma(E)$ and all $f\in\mathcal{H}(M)$.
\end{definition}

Note that \eqref{0} means that $\rho_E:(\Gamma(E),[\cdot,\cdot]_E)%
\rightarrow(\Gamma(T^{\prime }M),[\cdot,\cdot])$ is a complex Lie algebra
homomorphism.

Also, the Lie bracket $[\cdot ,\cdot ]_{E}$ satisfies the Jacobi identity 
\begin{equation}
\lbrack
s_{1},[s_{2},s_{3}]_{E}]_{E}+[s_{2},[s_{3},s_{1}]_{E}]_{E}+[s_{3},[s_{1},s_{2}]_{E}]_{E}=0.
\label{3}
\end{equation}

The main examples of holomorphic Lie algebroids are, of course, offered by
the holomorphic tangent bundle $T^{\prime }M$, or its cotangent bundle $%
T^{\prime \ast }M$. Some other examples can be derived from those presented
in \cite{I-Po}, in the particular cases of complex manifolds (complex
Poisson manifold, complete lift and prolongation of a Lie algebroid, direct
product structure, etc.).

An example of interest is the projective bundle of a complex Finsler
manifold. If $\ (M,F)$ is a complex Finsler manifold (\cite{Mu}), then $%
F:T^{\prime }M\rightarrow R^{+}$ is a real function of position $z\in M$ and
direction $\eta \in T_{z}^{\prime }M.$ Consider homogeneous coordinates $%
[\eta ]$ that determine $P_{z}M,$ the lines bundle in each $z\in \dot{M}.$
The reunion of all these lines gives the projective bundle $PM\cong
T^{\prime }M/_{C\ast },$ which has a natural structure of holomorphic
Lie algebroid by $\rho :[\eta ]\rightarrow \eta .$ Here, things are more
subtle. $PM$ as a complex manifold is isometric with the indicatrix $IM=\cup
_{z\in M}I_{z}M\,,\ $\ where $I_{z}M=\{\eta \in T_{z}^{\prime }M\ /\
F(z,\eta )=1\}.$ If $g_{i\bar{j}}(z,\eta )$ is the metric tensor of the
complex Finsler space (see below in the paper the corresponding notations),
then $\mathcal{G}=g_{i\bar{j}}d\eta ^{i}\otimes d\bar{\eta}^{j}$ is a metric
structure on $IM$, and $h=g_{i\bar{j}}dz^{i}\otimes d\bar{z}^{j}+(\log \
F^{2})_{i\bar{j}}d\eta ^{i}\otimes d\bar{\eta}^{j}$ is a metric structure on 
$PM$ (see \cite{B-K, Po}). Then a metric structure on $T^{\prime }M$,
descending from $h$ is $g=g_{i\bar{j}}dz^{i}\otimes d\bar{z}^{j}+g_{i\bar{j}%
}\delta \eta ^{i}\otimes \delta \bar{\eta}^{j}.$

\subsection*{Local expressions}

\label{S2}

If $(z^k)_{k=\overline{1,n}}$ is a local complex coordinate system on $%
U\subset M$ and $\{e_\alpha\}_{\alpha=\overline{1,m}}$ is a local frame of
sections of $E$ on $U$, then $(z^k,u^\alpha)$ are local complex coordinates
on $\pi^{-1}(U)\subset E$, where $e = u^\alpha e_\alpha(z)$, $e\in E$.

Let $g_{UV}:U\cap V\rightarrow GL(m,\mathbb{C})$ be the holomorphic
transition functions of $E$. In $z\in U\cap V$, $g_{UV}(z)$ is represented
by the complex matrix of holomorphic functions $\big(M^\alpha_\beta(z)\big)$%
, such that, if $(\widetilde{z}^k,\widetilde{u}^\alpha)$ are local
coordinates on $\pi^{-1}(V)$, then these change by the rules 
\begin{equation}  \label{7.1.1}
\widetilde{z}^k = \widetilde{z}^k(z),\qquad \widetilde{u}^\alpha =
M^\alpha_\beta(z)u^\beta.
\end{equation}

The Jacobi matrix of the transformation laws \eqref{7.1.1} is 
\begin{equation}  \label{7.1.2}
\left( 
\begin{array}{cc}
\dfrac{\partial\widetilde{z}^k}{\partial z^h} & 0 \\ 
&  \\ 
\dfrac{\partial M^\alpha_\beta}{\partial z^h}u^\beta & M^\alpha_\beta%
\end{array}
\right)
\end{equation}

Let $\big(W^\beta_\alpha\big)$ be the inverse matrix of $\big(M^\alpha_\beta%
\big)$, and $\{e_\alpha\}$ a base of sections on $E$, that is, $u=u^\alpha
e_\alpha$ for any $u\in\Gamma(E)$. Then these change by the rules 
\begin{equation*}
\widetilde{e}_\alpha = W^\beta_\alpha e_\beta.
\end{equation*}

The action of the holomorphic anchor map $\rho_E$ can locally be described
by 
\begin{equation}  \label{5}
\rho_E(e_\alpha) = \rho^k_\alpha\frac{\partial}{\partial z^k},
\end{equation}
while the Lie bracket $[\cdot,\cdot]_E$ is locally given by 
\begin{equation}  \label{6}
[e_\alpha,e_\beta]_E = C^{\:\gamma}_{\alpha\beta}e_\gamma.
\end{equation}
The holomorphic functions $\rho^k_\alpha = \rho^k_\alpha(z)$ and $%
C^{\:\gamma}_{\alpha\beta} = C^{\:\gamma}_{\alpha\beta}(z)$ on $M$ are
called \emph{the holomorphic structure functions} of the Lie algebroid E. A
change of local charts on $E$ implies 
\begin{equation}  \label{sch.ro}
\widetilde{\rho}^k_\alpha = W^\beta_\alpha \rho^h_\beta \dfrac{\partial%
\widetilde{z}^k}{\partial z^h}.
\end{equation}

Since $E$ is a holomorphic vector bundle, it has the structure of a complex
manifold, and the natural complex structure acts on its sections by $%
J_{E}(e_{\alpha })=ie_{\alpha }$ and $J_{E}(\bar{e}_{\alpha })=-i\bar{e}%
_{\alpha }$. Hence, the complexified bundle $E_{\mathbb{C}}$ of $E$
decomposes into $E_{\mathbb{C}}=E^{\prime }\oplus E^{\prime \prime }$. The
sections of $E_{\mathbb{C}}$ are given as usual by $\Gamma (E^{\prime
})=\{s-iJ_{E}s\ |\ s\in \Gamma (E)\}$ and $\Gamma (E^{\prime \prime
})=\{s+iJ_{E}s\ |\ s\in \Gamma (E)\}$, respectively. The local basis of
sections of $E^{\prime }$ is $\{e_{\alpha}\}_{\alpha =1,m}$, while for $%
E^{\prime \prime }$, the basis is represented by their conjugates $\{\bar{e}%
_{\alpha }:=e_{\bar{\alpha}}\}_{\alpha =1,m}.$ Since $\rho_{E}:E\rightarrow
T^{\prime }M$ is a homomorphism of complex modules, it extends naturally to
the complexified bundle by $\rho^{\prime }(e_{\alpha })=\rho _{E}(e_{\alpha
})$ and $\rho ^{\prime \prime }(e_{\bar{\alpha }})=\rho _{E}(e_{\bar{\alpha}%
})$. Thus, the anchor map can be decomposed into $\rho_{E}=\rho^{\prime
}\oplus \rho ^{\prime \prime }$ on the complexified bundle, and since $E$ is
holomorphic, the functions $\rho (z)$ are holomorphic, hence $\rho_{\alpha
}^{\bar{k}}=\rho _{\bar{\alpha}}^{k}=0$ and $\rho _{\bar{\alpha}}^{\bar{k}}=%
\overline{\rho _{\alpha }^{k}}$. Thus, the anchored bundles $(E^{\prime
},\rho ^{\prime },T^{\prime }M)$ and $(E^{\prime \prime},\rho ^{\prime
\prime },T^{\prime \prime }M)$ are complex Lie algebroids (\cite{I-Po}). The
Lie brackets are defined as 
\begin{equation*}
[e_{\alpha },e_{\beta }]^{\prime }=[e_{\alpha },e_{\beta }]_{E}=C_{\alpha
\beta }^{\:\gamma }e_{\gamma };\quad [e_{\bar{\alpha}},e_{\bar{\beta}%
}]^{\prime \prime}=\overline{[e_{\alpha },e_{\beta }]}_{E}=C_{\bar{\alpha}%
\bar{\beta}}^{\bar{\gamma}}e_{\bar{\gamma}},
\end{equation*}
where $C_{\bar{\alpha}\bar{\beta}}^{\bar{\gamma}}=\overline{C_{\alpha
\beta}^{\gamma }}.$ On the complexified bundle $E_{\mathbb{C}}$, we have to
consider also the Lie brackets 
\begin{equation*}
[e_{\alpha },e_{\bar{\beta}}]=C_{\alpha \bar{\beta}}^{\gamma }e_{\gamma
}+C_{\alpha \bar{\beta}}^{\bar{\gamma}}e_{\bar{\gamma}};\quad [e_{\bar{\alpha%
}},e_{\beta }]=C_{\bar{\alpha}\beta }^{\gamma }e_{\gamma }+C_{\bar{\alpha}%
\beta }^{\bar{\gamma}}s_{\bar{\gamma}}.
\end{equation*}
It is obvious that $\overline{[e_{\alpha },e_{\bar{\beta}}]}=[e_{\bar{\alpha}%
},e_{\beta }]$, hence $\overline{C_{\alpha \bar{\beta}}^{\bar{\gamma}}}%
=C_{\alpha \bar{\beta}}^{\bar{\gamma}}$ and $\overline{C_{\alpha \bar{\beta}%
}^{\gamma }}=C_{\bar{\alpha}\beta }^{\bar{\gamma}}$.

\begin{proposition}
The structure functions of the complexified Lie algebroid\newline
$(E_{\mathbb{C}},[\cdot,\cdot],\rho_E)$ satisfy the identities: 
\begin{eqnarray*}
\rho^j_\alpha\frac{\partial\rho^i_\beta}{\partial z^j} - \rho^j_\beta\frac{%
\partial\rho^i_\alpha}{\partial z^j} = \rho^i_\gamma
C^{\:\gamma}_{\alpha\beta},\quad \rho^i_\gamma C^{\:\gamma}_{\alpha\bar{\beta%
}} = -\rho^{\bar{j}}_{\bar{\beta}}\frac{\partial\rho^i_\alpha}{\partial\bar{z%
}^j},\quad \rho^{\bar{i}}_{\bar{\gamma}} C^{\:\bar{\gamma}}_{\alpha\bar{\beta%
}} = \rho^j_\alpha\frac{\partial\rho^{\bar{i}}_{\bar{\beta}}}{\partial z^j},
\\
\rho^{\bar{j}}_{\bar{\alpha}}\frac{\partial\rho^{\bar{i}}_\beta}{\partial%
\bar{z}^j} - \rho^{\bar{j}}_{\bar{\beta}}\frac{\partial\rho^{\bar{i}}_{\bar{%
\alpha}}}{\partial\bar{z}^j} = \rho^{\bar{i}}_{\bar{\gamma}} C^{\:\bar{\gamma%
}}_{\bar{\alpha}\bar{\beta}},\quad \rho^{\bar{i}}_{\bar{\gamma}} C^{\:\bar{%
\gamma}}_{\bar{\alpha}\beta} = -\rho^j_\beta\frac{\partial\rho^{\bar{i}}_{%
\bar{\alpha}}}{\partial z^j},\quad \rho^i_\gamma C^{\:\gamma}_{\bar{\alpha}%
\beta} = \rho^{\bar{j}}_{\bar{\alpha}}\frac{\partial\rho^i_\beta}{\partial%
\bar{z}^j}.
\end{eqnarray*}
\end{proposition}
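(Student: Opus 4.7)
The plan is to extract all six identities from the single hypothesis that the extended anchor $\rho_E:\Gamma(E_{\mathbb{C}})\to\Gamma(T_{\mathbb{C}}M)$ is a Lie algebra homomorphism, i.e.\ applying equation~(\ref{0}) on $E_{\mathbb{C}}$. I would evaluate this identity on the four types of pairs of local basis sections, namely $(e_\alpha,e_\beta)$, $(e_\alpha,e_{\bar\beta})$, $(e_{\bar\alpha},e_\beta)$ and $(e_{\bar\alpha},e_{\bar\beta})$, expand both sides in the local holomorphic/antiholomorphic tangent frames via (\ref{5}) and (\ref{6}), and equate coefficients of $\partial/\partial z^i$ and $\partial/\partial\bar z^i$ separately.

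For the pure holomorphic pair $(e_\alpha,e_\beta)$ the left side is $\rho_E(C^{\gamma}_{\alpha\beta}e_\gamma)=C^{\gamma}_{\alpha\beta}\rho^i_\gamma\,\partial/\partial z^i$, while the right side, computed via the Leibniz rule for the bracket of vector fields on $M$, equals $\bigl(\rho^j_\alpha\,\partial\rho^i_\beta/\partial z^j-\rho^j_\beta\,\partial\rho^i_\alpha/\partial z^j\bigr)\partial/\partial z^i$. Comparing gives the first identity. The fourth (its complex conjugate) falls out identically from the pair $(e_{\bar\alpha},e_{\bar\beta})$, using $\rho^{\bar k}_{\bar\alpha}=\overline{\rho^k_\alpha}$ and $C^{\bar\gamma}_{\bar\alpha\bar\beta}=\overline{C^\gamma_{\alpha\beta}}$.

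The mixed pairs are where the two remaining pairs of identities arise at once. For $(e_\alpha,e_{\bar\beta})$ the left side splits into holomorphic and antiholomorphic parts as $C^{\gamma}_{\alpha\bar\beta}\rho^i_\gamma\,\partial/\partial z^i+C^{\bar\gamma}_{\alpha\bar\beta}\rho^{\bar i}_{\bar\gamma}\,\partial/\partial\bar z^i$, whereas the right side $[\rho^j_\alpha\,\partial/\partial z^j,\ \rho^{\bar i}_{\bar\beta}\,\partial/\partial\bar z^i]$ expands, again by Leibniz, to $\rho^j_\alpha(\partial\rho^{\bar i}_{\bar\beta}/\partial z^j)\,\partial/\partial\bar z^i-\rho^{\bar j}_{\bar\beta}(\partial\rho^i_\alpha/\partial\bar z^j)\,\partial/\partial z^i$ (the cross terms vanish because $\partial/\partial z^j$ and $\partial/\partial\bar z^i$ commute). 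Matching the two components yields the second and third identities. The symmetric procedure on $(e_{\bar\alpha},e_\beta)$ produces the fifth and sixth.

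The only genuine obstacle is bookkeeping: correctly separating each Leibniz term into its $\partial/\partial z^i$ and $\partial/\partial\bar z^i$ parts and carrying the conjugate indices through. No tool beyond the homomorphism property of $\rho_E$ and the formula for the bracket of vector fields is required; in particular, the holomorphicity of the anchor is \emph{not} invoked in deriving the identities. If one subsequently substitutes $\partial\rho^i_\alpha/\partial\bar z^j=0$ and $\partial\rho^{\bar i}_{\bar\beta}/\partial z^j=0$, four of the six identities collapse to the statement that the anchor annihilates the mixed-type brackets, which is the content one normally uses downstream.
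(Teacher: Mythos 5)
Your proposal is correct and is precisely the computation the paper intends: its proof consists of the single line that the identities ``follow by direct computations using (1.1), (1.4) and (1.5)'', i.e.\ applying the homomorphism property of the anchor to the four types of basis pairs on $E_{\mathbb{C}}$ and comparing coefficients in the $\partial/\partial z^i$, $\partial/\partial\bar z^i$ frame, exactly as you do. Your closing remark that holomorphicity of $\rho$ is not needed to derive the identities themselves (only to simplify them afterwards) is a correct and worthwhile observation, but it does not change the route.
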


\begin{proof}
The identities follow by direct computations using \eqref{0}, \eqref{5} and \eqref{6}.
\end{proof}

\section{The geometry of the total space of $E$}

Two approaches on the tangent bundle of a holomorphic Lie algebroid $E$ will be described in this section. The first is the classical study of the tangent bundle of $E$, while the second is that of the prolongation on $E$. The latter idea appeared from the need of introducing geometrical objects such as nonlinear connections or sprays which could be studied in a similar manner to the tangent bundle of a complex manifold.

\subsection{The tangent bundle of a holomorphic Lie algebroid}

\label{S3}

Recall (\cite{Mu}) that a complex Lagrange space is a pair $(M,L)$, where $%
L:T^{\prime }M\rightarrow\mathbb{R}$ is a regular Lagrangian defined on the
holomorphic tangent bundle of a complex manifold. The geometrical objects
acting on such a space are sections in the complexified tangent bundle $T_{%
\mathbb{C}}(T^{\prime }M)=T^{\prime }(T^{\prime }M)\oplus T^{\prime \prime
}(T^{\prime }M)$.

The holomorphic tangent bundle $T^{\prime }M$ of $M$ is in its turn a
complex manifold, and the changes of local coordinates $(z^h,\eta^h)$ to $%
(z^{\prime k},\eta^{\prime k})$ are 
\begin{equation}
z^{\prime k}=z^{\prime k}(z)\ ,\quad \eta ^{\prime k}=\frac{\partial
z^{\prime k}}{\partial z^{h}}\eta^{h}.  \label{7.1.1.1}
\end{equation}

The natural frame $\bigg\{\dfrac{\partial }{\partial z^{k}},\dfrac{\partial 
}{\partial \eta ^{k}}\bigg\}$ of $T_{(z,\eta )}^{\prime }(T^{\prime }M)$ in
a fixed point, changes from $(z^{h},\eta ^{h})$ to $(z^{\prime
k},\eta^{\prime k})$ by the rules 
\begin{align}
\dfrac{\partial }{\partial z^{h}}& =\dfrac{\partial z^{\prime k}}{\partial
z^{h}}\dfrac{\partial }{\partial z^{\prime k}}+\dfrac{\partial ^{2}z^{\prime
k}}{\partial z^{j}\partial z^{h}}\eta ^{j}\dfrac{\partial }{\partial
\eta^{\prime k}},  \label{sch.t'm} \\
\dfrac{\partial }{\partial \eta ^{h}}& =\dfrac{\partial z^{\prime k}}{%
\partial z^{h}}\dfrac{\partial }{\partial \eta ^{\prime k}}.  \notag
\end{align}

The generalization consists in introducing a Lagrange structure (in
particular, Finsler) on a holomorphic vector bundle, a well-known idea from T. Aikou 
\cite{A}, and G. Munteanu \cite{Mu}. The basis manifold of such a space is the complex
manifold $E$ endowed with a regular Lagrangian $L:E\rightarrow\mathbb{R}$
and the geometry of the space obviously implies studying geometrical objects
(vectors, metric structures, connections) which act on sections in the
complexified tangent bundle $T_{\mathbb{C}}E=T^{\prime }E\oplus T^{\prime
\prime }E$, where $T^{\prime }E$ is the holomorphic tangent bundle and $%
T^{\prime \prime }E=\overline{T^{\prime }E}$. In particular, a Lie algebroid
is first of all a holomorphic vector bundle and its geometry must be studied.

On $T^{\prime }E$, a natural frame of fields is $\bigg\{\dfrac{\partial }{%
\partial z^{k}},\dfrac{\partial }{\partial u^{\alpha }}\bigg\}$, which, due
to the \eqref{7.1.2} matrix, changes by the rules 
\begin{align}
\dfrac{\partial }{\partial z^{h}}& =\dfrac{\partial \widetilde{z}^{k}}{%
\partial z^{h}}\dfrac{\partial }{\partial \widetilde{z}^{k}}+\dfrac{\partial
M_{\beta }^{\alpha }}{\partial z^{h}}u^{\beta }\dfrac{\partial }{\partial 
\widetilde{u}^{\alpha }},  \label{3.1.5} \\
\dfrac{\partial }{\partial u^{\beta }}& =M_{\beta }^{\alpha }\dfrac{\partial 
}{\partial \widetilde{u}^{\alpha }}.  \notag
\end{align}
Since $E$ is a complex manifold, it follows that $\bigg\{\dfrac{\partial }{%
\partial \overline{z}^{k}},\dfrac{\partial }{\partial \overline{u}^{\alpha }}%
\bigg\}$ is a local frame on $T^{\prime \prime }E=\overline{T^{\prime }E}$
and its rules of change are deduced from \eqref{3.1.5} by conjugation.

Now, let us consider $E$ and $T^{\prime }M$ as manifolds and we prove that
the anchor map $\rho_E$ maps the local coordinates $(z^{k},u^{\alpha })$ on $%
E,$ with changes (\ref{7.1.1}), in a local map $(z^{k},\eta ^{k})$ on $%
T^{\prime }M$, with changes (\ref{7.1.1.1}). Further, let us consider the
same local charts on $M$ for $E$ and $T^{\prime }M$, that is, we have the
same changes $\widetilde{z}^{k}(z)=z^{\prime k}(z)$.

As a mapping between manifolds, the holomorphic anchor $\rho$ induced by $%
\rho_E$ maps $(z^{k},u^{\alpha})$ on $E$ to $(z^{k},\eta^{k})$ on $T^{\prime
}M$, where we define the directional coordinates by 
\begin{equation}
\eta^{k}=u^{\alpha }\rho_{\alpha}^{k}(z).  \label{II.1}
\end{equation}
Let us prove that (\ref{II.1}) define a sistem of coordinates on $%
T^{\prime}M.$ A change of local charts implies that $(\widetilde{z}^{k},%
\widetilde{u}^{\alpha })$ is mapped to $(z^{\prime k},\eta ^{\prime k})$,
where $z^{\prime k}=\widetilde{z}^{k}(z)=z^{\prime k}(z)$ and 
\begin{equation*}
\eta ^{\prime k}=\widetilde{u}^{\alpha }\widetilde{\rho }_{\alpha }^{k}(%
\widetilde{z})=M_{\beta }^{\alpha }u^{\beta }W_{\alpha }^{\gamma }\rho
_{\gamma }^{h}\dfrac{\partial \widetilde{z}^{k}}{\partial z^{h}}=u^{\beta
}\rho _{\gamma }^{h}\dfrac{\partial \widetilde{z}^{k}}{\partial z^{h}}%
=\eta^{h}\dfrac{\partial z^{\prime k}}{\partial z^{h}},
\end{equation*}
so that the changes (\ref{7.1.1.1}) are satisfied, and moreover we have: 
\begin{equation}  \label{sch.coord.}
z^{\prime k} = z^{\prime k}(z),\qquad \eta ^{\prime k} = u^{\gamma }\rho
_{\gamma }^{h}\dfrac{\partial z^{\prime k}}{\partial z^{h}}.
\end{equation}

These transformation laws have the following Jacobi matrix: 
\begin{equation}  \label{J}
\left( 
\begin{array}{cc}
\dfrac{\partial z^{\prime k}}{\partial z^j} & 0 \\ 
&  \\ 
\dfrac{\partial}{\partial z^j}\left(\rho^h_\gamma\dfrac{\partial z^{\prime k}%
}{\partial z^h}\right) u^\gamma & \rho^h_\beta\dfrac{\partial z^{\prime k}}{%
\partial z^h}%
\end{array}
\right)
\end{equation}

Denote by $\rho_*:T_{\mathbb{C}}E\rightarrow T_{\mathbb{C}}(T^{\prime }M)$
the tangent mapping of the anchor $\rho_E:\Gamma(E)\rightarrow\Gamma(T^{%
\prime }M)$ and by $J^*_{T^{\prime}M}:T_{\mathbb{C}}(T^{\prime}M)\rightarrow
T_{\mathbb{C}}(T^{\prime }M)$ the natural complex structure on $T_{\mathbb{C}%
}(T^{\prime }M)$.

\begin{definition}
A complex structure $J_{E}^{\ast }$ on the complex tangent bundle $T_{%
\mathbb{C}}E$ is an endomorphism $J_{E}^{\ast }:T_{\mathbb{C}}E\rightarrow
T_{\mathbb{C}}E$ given by 
\begin{align}
J_{E}^{\ast }\bigg(\dfrac{\partial }{\partial z^{k}}\bigg)=i\dfrac{\partial 
}{\partial z^{k}},\quad & J_{E}^{\ast }\bigg(\dfrac{\partial }{\partial 
\overline{z}^{k}}\bigg)=-i\dfrac{\partial }{\partial \overline{z}^{k}},
\label{act.str.cplx.} \\
J_{E}^{\ast }\bigg(\dfrac{\partial }{\partial u^{\alpha }}\bigg)=i\dfrac{%
\partial }{\partial u^{\alpha }},\quad & J_{E}^{\ast }\bigg(\dfrac{\partial 
}{\partial \overline{u}^{\alpha }}\bigg)=-i\dfrac{\partial }{\partial 
\overline{u}^{\alpha }}.  \notag
\end{align}
\end{definition}

The complex structure $J^*_E$ satisfies the identities ${J^*_E}^2 = -\opn{Id}_{T_{\mathbb{C}}E}$ and $J^*_{T^{\prime }M}\circ\rho_* = \rho_*\circ J^*_E$.

The splitting $T_{\mathbb{C}}E=T^{\prime }E\oplus T^{\prime \prime }E$ of
the complexified tangent bundle is due to the complex structure $J_{E}^{\ast
}$, the holomorphic and antiholomorphic tangent bundles of $E$ corresponding
to the eigenvalues $\pm i$ of $J_{E}^{\ast }$. Moreover, there are two
modules of sections on $T_{\mathbb{C}}E$, $\Gamma(T^{\prime *}_Es\ |\
s\in\Gamma(T_{\mathbb{C}}E)\}$ and $\Gamma(T^{\prime \prime *}_Es\ |\
s\in\Gamma(T_{\mathbb{C}}E)\}$. Since the tangent anchor map $\rho_{\ast }$
is holomorphic, if $s\in \Gamma (T^{\prime }E)$ is a holomorphic section on $%
T_{\mathbb{C}}E$, then $\rho _{\ast }(s)\in \Gamma (T^{\prime}(T^{\prime
}M)))$. Similarly, for an antiholomorphic section $\overline{s}\in \Gamma
(T^{\prime \prime }E)$, $\rho _{\ast }(\overline{s})\in \Gamma(T^{\prime
\prime }(T^{\prime }M))$.

The anchor $\rho$ maps the coordinates $(z^{k},u^{\alpha })$ from a local
chart on the manifold $E$ to the coordinates $(z^{k},\eta^{k}=u^{\alpha
}\rho_{\alpha }^{k}(z))$ in a local chart on $\rho(E)\subset T^{\prime }M$.
The Jacobi matrix of the morphism is $\rho$ is 
\begin{equation*}
\left( 
\begin{array}{cc}
\delta_{h}^{k} & 0 \\ 
&  \\ 
\dfrac{\partial \rho _{\alpha }^{k}}{\partial z^{h}}u^{\alpha } & \rho
_{\alpha }^{h}%
\end{array}
\right)
\end{equation*}

Then $\bigg\{\dfrac{\partial }{\partial z^{k}},\dfrac{\partial }{\partial
\eta ^{k}},\dfrac{\partial }{\partial \overline{z}^{\prime k}},\dfrac{%
\partial }{\partial \overline{\eta }^{k}}\bigg\}$ is the natural frame field
on $T_{\mathbb{C}}(T^{\prime }M)$ and the action of the tangent mapping $%
\rho_{\ast }$ is locally described on $\rho (E)$ by 
\begin{align}
\rho _{\ast }\bigg(\dfrac{\partial }{\partial z^{k}}\bigg)& =:\dfrac{%
\partial ^{\ast }}{\partial z^{k}}=\dfrac{\partial }{\partial z^{k}}
+u^{\alpha }\dfrac{\partial \rho _{\alpha }^{h}}{\partial z^{k}}\dfrac{%
\partial }{\partial \eta ^{h}},  \label{act.apl.tg.} \\
\rho _{\ast }\bigg(\dfrac{\partial }{\partial u^{\alpha }}\bigg)& =:\dfrac{%
\partial ^{\ast }}{\partial u^{\alpha }}=\rho _{\alpha }^{h}\dfrac{\partial 
}{\partial \eta ^{h}}  \notag
\end{align}
and their conjugates. The dual basis of the natural frame $\bigg\{\dfrac{%
\partial }{\partial z^{k}},\dfrac{\partial }{\partial \eta ^{k}}\bigg\}$
induced by $\rho _{\ast }$ on $\rho (E)$ is
\begin{align}
d^{\ast }z^{k} &= dz^{k}  \label{dual} \\
d^{\ast }\eta ^{k} &= u^{\alpha }\dfrac{\partial \rho _{\alpha }^{k}}{%
\partial z^{h}}dz^{h}+\rho_{\alpha }^{k}du^{\alpha }  \notag
\end{align}

For a change of coordinates on $T_{\mathbb{C}}(T^{\prime }M)$, the change
laws on $T_{\mathbb{C}}E$ are, due to the Jacobi matrix \eqref{J}, 
\begin{align}
\dfrac{\partial ^{\ast }}{\partial z^{j}}& =\dfrac{\partial z^{\prime k}}{%
\partial z^{j}}\dfrac{\partial }{\partial z^{\prime k}}+\dfrac{\partial }{%
\partial z^{j}}\left( \rho _{\gamma }^{h}\dfrac{\partial z^{\prime k}}{%
\partial z^{h}}\right) u^{\gamma }\dfrac{\partial }{\partial \eta ^{\prime k}%
},  \label{sch.coord.ro} \\
\dfrac{\partial ^{\ast }}{\partial u^{\beta }}& =\rho _{\beta }^{h}\dfrac{%
\partial z^{\prime k}}{\partial z^{h}}\dfrac{\partial }{\partial \eta
^{\prime k}}  \notag
\end{align}%
and the conjugates.

\subsubsection{The Lie algebroid structure of $T^{\prime }E$}

\label{Ss3.1}

We prove that $T^{\prime }E$ has a Lie algebroid structure over the basis
manifold $M$. Let $p^{\prime }:T^{\prime }M\rightarrow M$ be the projection
of the holomorphic tangent bundle of $M$ and $p_{\ast }^{\prime }:T^{\prime
}T^{\prime }M\rightarrow T^{\prime }M$ its tangent map, acting at a point $%
(z,\eta)$ by $Z^{k}\dfrac{\partial }{\partial z^{k}}+V^{k} \dfrac{\partial }{%
\partial \eta ^{k}}\ \xrightarrow{\ p'_*\ } Z^{k}\dfrac{\partial }{\partial
z^{k}}$.

Then, the following diagram 
\begin{equation}
\begin{array}{ccc}
T^{\prime }E & \xrightarrow{\rho_*} & T^{\prime }T^{\prime }M \\ 
{\scriptstyle\pi _{E}}\downarrow &  & \downarrow {\scriptstyle %
p_{\ast}^{\prime }} \\ 
\ E & \xrightarrow{\ \rho\ } & T^{\prime }M \\ 
{\scriptstyle\pi }\downarrow &  & \downarrow {\scriptstyle p^{\prime }} \\ 
M & \xrightarrow{\operatorname{Id}_M} & M%
\end{array}%
\end{equation}
suggests the definition of the map $\Upsilon :T^{\prime }E\rightarrow
T^{\prime }M$, $\ \Upsilon =p_{\ast }^{\prime }\circ \rho _{\ast }$, in
order to introduce a holomorphic Lie algebroid structure on $T^{\prime }E$.
Since $T^{\prime }M$ and $T^{\prime }E$ are holomorphic bundles, from the
definition of $\Upsilon$ follows that it is a vector bundle morphism.

Locally, we have $Z = Z^{k}\dfrac{\partial }{\partial z^{k}} + V^{\alpha }%
\dfrac{\partial }{\partial u^{\alpha }}\xrightarrow{\ \rho _{\ast }\ }\
Z^{\ast } = Z^{k}\dfrac{\partial ^{\ast }}{\partial z^{k}} + V^{\alpha }%
\dfrac{\partial^{\ast }}{\partial u^{\alpha }}$. The action %
\eqref{act.apl.tg.} and the definition of $p^{\prime }_*$ yield $\Upsilon
(Z) = Z^{k}\dfrac{\partial }{\partial z^{k}}$.

Since $T^{\prime }E$ is a vector bundle over $M$, taking the Lie bracket of
two sections $[Z,W]_{T^{\prime }E}$ and $f:M\rightarrow\mathbb{C}$, we
obtain $\dfrac{\partial f}{\partial u^{\alpha }}=0$, such that 
\begin{equation}  \label{croset}
[Z,fW]_{T^{\prime }E} = f[Z,W] _{T^{\prime }E}+\Upsilon(Z)f\ W.
\end{equation}

This leads to the following

\begin{proposition}
The holomorphic tangent bundle $T^{\prime }E$ has a structure of a Lie
algebroid over the complex manifold $M$, with the anchor map $\Upsilon$.
\end{proposition}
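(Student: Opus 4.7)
The plan is to assemble the three ingredients in the definition of a Lie algebroid (a vector bundle over $M$, a bracket on sections, an anchor to $T'M$) and to verify the compatibility conditions, most of which are essentially immediate from constructions already in place in the excerpt.

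First I would make the vector-bundle-over-$M$ structure explicit: $T'E$ projects to $E$ via $\pi_E$ and $E$ projects to $M$ via $\pi$, so $\pi\circ\pi_E:T'E\to M$ is a holomorphic surjection with fibres that carry a natural complex vector space structure. In the local coordinates $(z^k,u^\alpha)$ together with fibre coordinates $(Z^k,V^\alpha)$, the change-of-chart formulas \eqref{3.1.5} are $\mathbb{C}$-linear in the fibre variables, so the bundle structure is well defined. Then $\Upsilon=p'_*\circ\rho_*$ is holomorphic as a composition of holomorphic tangent maps, and its local description $\Upsilon(Z^k\partial/\partial z^k+V^\alpha\partial/\partial u^\alpha)=Z^k\,\partial/\partial z^k$ makes clear it is a $\mathbb{C}$-linear bundle map covering $\mathrm{id}_M$.

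Second, I would take as Lie bracket on $\Gamma(T'E)$ the ordinary commutator of holomorphic vector fields on the complex manifold $E$, which automatically satisfies the Jacobi identity \eqref{3}. The nontrivial point is the anchor Leibniz rule: for $f\in\mathcal H(M)$, viewed as a function on $E$ through $\pi$, one has $\partial f/\partial u^\alpha=0$, so writing $W=W^k\partial/\partial z^k+W^\alpha\partial/\partial u^\alpha$ and $Z=Z^k\partial/\partial z^k+Z^\alpha\partial/\partial u^\alpha$ and expanding $[Z,fW]_{T'E}$ in the natural frame, every term in which a $\partial/\partial u^\alpha$ hits $f$ vanishes, leaving precisely
\begin{equation*}
[Z,fW]_{T'E}=f[Z,W]_{T'E}+Z^k\frac{\partial f}{\partial z^k}\,W=f[Z,W]_{T'E}+\Upsilon(Z)(f)\,W,
\end{equation*}
which is the identity \eqref{croset} already recorded in the text.

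Finally, I would check that $\Upsilon$ is a Lie algebra homomorphism, i.e. $\Upsilon([Z,W]_{T'E})=[\Upsilon(Z),\Upsilon(W)]_{T'M}$. Here the structural input is that $\rho_*$ is the tangent map of a holomorphic morphism and $p'_*$ is the tangent of a bundle projection, so both are $\pi$-related/$p'$-related to identity maps of the base; consequently their composition intertwines vector field brackets on the horizontal components. In local coordinates this reduces to verifying that the $Z^k\partial/\partial z^k$ part of $[Z,W]_{T'E}$, taken modulo the vertical directions, equals the ordinary bracket of $Z^k\partial/\partial z^k$ and $W^k\partial/\partial z^k$ on $M$, which is a direct check.

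The main obstacle is conceptual rather than computational: one must be careful that the Leibniz rule is stated with respect to $\mathcal H(M)$-functions (not $\mathcal H(E)$-functions), since the latter would fail — the identity \eqref{croset} genuinely uses $\partial f/\partial u^\alpha=0$. Once that subtlety is absorbed, the verification is essentially a rewriting of the standard fact that the tangent bundle of a manifold is a Lie algebroid, transported along the chain $T'E\xrightarrow{\rho_*}T'T'M\xrightarrow{p'_*}T'M$.
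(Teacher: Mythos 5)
Your proposal is correct and follows essentially the same route as the paper: the text defines $\Upsilon=p'_*\circ\rho_*$, notes it is a vector bundle morphism with local action $Z\mapsto Z^k\partial/\partial z^k$, derives the Leibniz identity \eqref{croset} from $\partial f/\partial u^\alpha=0$ for $f\in\mathcal{H}(M)$, and then records the homomorphism property $\Upsilon[Z,W]_{T'E}=[\Upsilon(Z),\Upsilon(W)]$ immediately after the proposition. You have simply written out the same verification in fuller detail, including the correct emphasis on the subtlety that the Leibniz rule is taken over $\mathcal{H}(M)$ rather than $\mathcal{H}(E)$.
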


Using the definition of $\Upsilon$, it follows that it is a homomorphism
between the complex Lie algebras $\big(\Gamma(T^{\prime
}E),[\cdot,\cdot]_{T^{\prime }E}\big)$ and $\big(\Gamma(T^{\prime
}M),[\cdot,\cdot]\big)$, that is, 
\begin{equation*}
\Upsilon[Z,W]_{T^{\prime }E} = [\Upsilon(Z),\Upsilon(W)],\quad \forall
Z,W\in\Gamma(T^{\prime }E).
\end{equation*}

\subsubsection{Nonlinear connections on $T^{\prime }E$}

\label{Ss3.2}

It is obvious that the rules of change of the natural frame of fields on $T_{%
\mathbb{C}}E$ are complicated. As in the case of Finsler geometry, the
solution to this problem is the method of nonlinear connection. Consider $%
\pi_*$ the tangent mapping of the projection $\pi:E\rightarrow M$. Then the 
\emph{vertical holomorphic tangent bundle} of $E$ can be defined by $VE
=\ker\pi_*$. A local frame of fields on $VE$ is $\bigg\{\dfrac{\partial}{%
\partial u^\alpha}\bigg\}_{\alpha=\overline{1,m}}$ and if $%
\pi^*(T^{\prime}M) $ is the pull-back bundle of the holomorphic tangent
bundle of $M$, then the following fundamental sequence is obtained (\cite{Mu}%
): 
\begin{equation}  \label{7.1.3}
0\ \rightarrow\ VE\ \overset{i}{\rightarrow}\ T^{\prime }E\ \overset{d\pi}{%
\rightarrow}\ \pi^*(T^{\prime }M)\ \rightarrow\ 0.
\end{equation}

As usual, a splitting $C:T^{\prime }E\rightarrow VE$ in this sequence is
called a \emph{connection} on the vertical bundle and it determines the
decomposition 
\begin{equation}  \label{7.1.4}
T^{\prime }E = VE \oplus HE
\end{equation}
of the holomorphic tangent bundle of $E$, where $HE$ is the \emph{horizontal
distribution}, isomorphic to the pull back bundle $\pi^*(T^{\prime }M)$ by
the morphism $d\pi$ from the exact sequence \eqref{7.1.3}. This isomorphism
is called \emph{complex nonlinear connection} or \emph{Ehresmann connection}
(T. Aikou, \cite{A}) on the holomorphic vector bundle $E$.

The decomposition of the complexified tangent bundle $T_{\mathbb{C}}E$ is
obtained by conjugation:
\begin{equation*}
T_{\mathbb{C}}E = HE \oplus VE \oplus \overline{HE} \oplus \overline{VE}.
\end{equation*}

The \emph{horizontal lift} $l^h:\pi^*(T^{\prime }M)\rightarrow HE$
determined by the nonlinear connection is defined by 
\begin{equation}  \label{7.1.6}
l^h\bigg(\dfrac{\partial}{\partial z^k}\bigg) = \dfrac{\delta}{\delta z^k} = 
\dfrac{\partial}{\partial z^k} - N^\alpha_k\dfrac{\partial}{\partial u^\alpha%
},
\end{equation}
(see \cite{Mu}), where the functions $N^\alpha_k(z,u)$ are called the \emph{%
coefficients of the complex nonlinear connection} on $E$. A change of local
coordinates implies that $\dfrac{\delta}{\delta z^h}$ changes by the rule 
\begin{equation*}
\dfrac{\delta}{\delta z^h} = \dfrac{\partial\widetilde{z}^k}{\partial z^h}%
\dfrac{\delta}{\delta\widetilde{z}^k},
\end{equation*}
such that, using also \eqref{3.1.5}, the laws of change for the functions $%
N^\alpha_k$ are obtained: 
\begin{equation}  \label{sch.coef.}
\dfrac{\partial\widetilde{z}^k}{\partial z^h}\widetilde{N}^\alpha_k =
M^\alpha_\beta N^\beta_h - \dfrac{\partial M^\alpha_\beta}{\partial z^h}%
u^\beta.
\end{equation}

A field of frames $\bigg\{\dfrac{\delta}{\delta z^k},\dfrac{\partial}{%
\partial u^\alpha}\bigg\}$ on $T^{\prime }E$ is obtained, called the \emph{%
adapted frame} of the complex nonlinear connection. A simple computation
using \eqref{croset} and \eqref{7.1.6} leads to the following result.

\begin{proposition}
The Lie brackets of the adapted frame on $T^{\prime }E$ are 
\begin{align}
\bigg[\dfrac{\delta}{\delta z^k},\dfrac{\delta}{\delta z^h}\bigg]_{T^{\prime
}E} &= \bigg(\dfrac{\partial N^\alpha_k}{\partial z^h} - \dfrac{\partial
N^\alpha_h}{\partial z^k}\bigg)\dfrac{\partial}{\partial u^\alpha};
\label{7.1.10} \\
\bigg[\dfrac{\delta}{\delta z^k},\dfrac{\partial}{\partial u^\beta}\bigg]%
_{T^{\prime }E} &= \bigg[\dfrac{\partial}{\partial u^\alpha},\dfrac{\partial%
}{\partial u^\beta}\bigg]_{T^{\prime }E} = 0.  \notag
\end{align}
\end{proposition}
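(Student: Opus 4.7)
The plan is to compute each of the three brackets by directly expanding the horizontal basis vector via (7.1.6), applying bilinearity, and invoking the Leibniz-type identity \eqref{croset}. I would proceed in order of increasing complexity, exploiting throughout the fact that the anchor $\Upsilon$ annihilates the vertical coordinate directions.

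First I would dispose of the vertical--vertical bracket $[\partial/\partial u^{\alpha},\partial/\partial u^{\beta}]_{T'E}$. Since the fiber coordinate fields commute and $\Upsilon(\partial/\partial u^{\alpha})=0$, any Leibniz expansion produces no anchor correction, so this bracket is zero.

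Second, for the mixed bracket $[\delta/\delta z^{k},\partial/\partial u^{\beta}]_{T'E}$, I would substitute $\delta/\delta z^{k}=\partial/\partial z^{k}-N^{\alpha}_{k}\partial/\partial u^{\alpha}$ from (7.1.6) and apply bilinearity. The piece $[\partial/\partial z^{k},\partial/\partial u^{\beta}]$ vanishes as a bracket of coordinate fields, and the remaining piece $[N^{\alpha}_{k}\partial/\partial u^{\alpha},\partial/\partial u^{\beta}]_{T'E}$ is handled via \eqref{croset}: the anchor correction involves $\Upsilon(\partial/\partial u^{\beta})(N^{\alpha}_{k})$, which is zero because $\Upsilon$ kills the vertical part, and the main term vanishes by Step~1.

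Third, the principal computation is $[\delta/\delta z^{k},\delta/\delta z^{h}]_{T'E}$. Expanding via (7.1.6) and bilinearity yields four brackets. The purely horizontal one $[\partial/\partial z^{k},\partial/\partial z^{h}]$ is zero. The two cross brackets $[\partial/\partial z^{k},N^{\beta}_{h}\partial/\partial u^{\beta}]_{T'E}$ and $[N^{\alpha}_{k}\partial/\partial u^{\alpha},\partial/\partial z^{h}]_{T'E}$ are evaluated through \eqref{croset}, where the anchor derivative $\Upsilon(\partial/\partial z^{j}) = \partial/\partial z^{j}$ produces precisely the coefficients $\partial N^{\beta}_{h}/\partial z^{k}$ and $\partial N^{\alpha}_{k}/\partial z^{h}$ with the correct signs. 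The last bracket $[N^{\alpha}_{k}\partial/\partial u^{\alpha},N^{\beta}_{h}\partial/\partial u^{\beta}]_{T'E}$ again vanishes because both inputs lie in $\ker\Upsilon$, so that \eqref{croset} produces no correction terms and one is reduced to Step~1.

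The main obstacle is not any deep calculation but careful bookkeeping of signs in the successive applications of \eqref{croset}, together with the conceptual point that it is exactly the vanishing of the anchor on vertical sections that prevents spurious $\partial N^{\alpha}_{k}/\partial u^{\beta}$ contributions from appearing in the Lie algebroid bracket on $T'E$ over $M$, leaving only the pure $z$-derivatives claimed in \eqref{7.1.10}.
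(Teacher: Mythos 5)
Your route is the same one the paper takes: the authors also dispose of this proposition with the single remark that it follows by ``a simple computation using \eqref{croset} and \eqref{7.1.6}''. The problem is the step on which your whole argument leans, namely discarding every $\partial N^\alpha_k/\partial u^\beta$ contribution on the grounds that $\Upsilon$ annihilates vertical directions. The identity \eqref{croset} was derived only for functions $f\colon M\rightarrow\mathbb{C}$, i.e.\ under the explicit hypothesis $\dfrac{\partial f}{\partial u^\alpha}=0$; it is precisely in that situation that the ordinary derivative $Z(f)$ collapses to $\Upsilon(Z)(f)$. The connection coefficients $N^\alpha_k=N^\alpha_k(z,u)$ are not functions of this type, so they cannot be fed into \eqref{croset} in the role of $f$. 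If $[\cdot,\cdot]_{T^{\prime}E}$ is the ordinary Lie bracket of vector fields on $E$ (which is how the paper introduces it), the honest expansion gives
\begin{align*}
\bigg[\dfrac{\delta}{\delta z^k},\dfrac{\partial}{\partial u^\beta}\bigg]
&=\dfrac{\partial N^\alpha_k}{\partial u^\beta}\dfrac{\partial}{\partial u^\alpha},\\
\bigg[\dfrac{\delta}{\delta z^k},\dfrac{\delta}{\delta z^h}\bigg]
&=\bigg(\dfrac{\delta N^\alpha_k}{\delta z^h}-\dfrac{\delta N^\alpha_h}{\delta z^k}\bigg)\dfrac{\partial}{\partial u^\alpha},
\qquad
\dfrac{\delta N^\alpha_k}{\delta z^h}:=\dfrac{\partial N^\alpha_k}{\partial z^h}-N^\gamma_h\dfrac{\partial N^\alpha_k}{\partial u^\gamma}.
\end{align*}

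In other words, the two places you wave away --- the mixed bracket in your Step~2 and the term $[N^\alpha_k\partial_\alpha,\,N^\beta_h\partial_\beta]$ in your Step~3 --- do not vanish: they contribute exactly the $u$-derivatives that you assert are ``prevented'' from appearing. Those contributions disappear only under an extra hypothesis (for instance $N^\alpha_k$ independent of $u^\beta$, as for a linear connection), which is not assumed here. So as written your proof has a genuine gap; to be fair, it mirrors an imprecision already present in the statement itself, whose displayed formulas agree with the true Lie brackets only modulo these $\partial/\partial u^\beta$ terms. A correct write-up should either prove the $\delta$-derivative version above, or state explicitly the restriction on $N^\alpha_k$ under which the plain $\partial/\partial z$ version holds.
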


Let $\bigg\{\dfrac{\delta}{\delta z^k},\dfrac{\partial}{\partial\eta^k}%
\bigg\}$ be the adapted frame of a complex nonlinear connection on $%
T^{\prime }T^{\prime }M$, where 
\begin{equation}  \label{3.1.9}
\dfrac{\delta}{\delta z^k} = \dfrac{\partial}{\partial z^k} - N^h_k\dfrac{%
\partial}{\partial\eta^h}
\end{equation}
and denote by $\{dz^k,\delta\eta^k\}$ the dual basis, with 
\begin{equation}  \label{3.1.16}
\delta\eta^k = d\eta^k + N^k_h dz^h.
\end{equation}
The coefficients of the complex nonlinear connection change by the rules (%
\cite{Mu}) 
\begin{equation}  \label{3.1.11}
N^{\prime j}_h\dfrac{\partial z^{\prime h}}{\partial z^k} = \dfrac{\partial
z^{\prime j}}{\partial z^h}N^h_k - \dfrac{\partial^2 z^{\prime j}}{\partial
z^h\partial z^k}\eta^h.
\end{equation}

Analogously, on $T^{\prime }E$, the dual basis of the adapted frame is $%
\{dz^{k},\delta u^{\alpha }\}$, where 
\begin{equation}
\delta u^{\alpha }=du^{\alpha }+N_{h}^{\alpha }dz^{h}.  \label{3.1.16/E}
\end{equation}

From now on, we will use the well-known abbreviations 
\begin{equation*}
\delta_k = \dfrac{\delta}{\delta z^k},\quad \partial_k = \dfrac{\partial}{%
\partial z^k},\quad \partial_\alpha = \dfrac{\partial}{\partial u^\alpha}.
\end{equation*}

\subsubsection{Linear connections on $T^{\prime }E$}

\label{Ss3.3}

Linear connections can be introduced on the holomorphic tangent bundle $%
T^{\prime }E$ of the holomorphic Lie algebroid $E$ in a similar manner as in
the case of the holomorphic tangent bundle $T^{\prime }T^{\prime }M$. Denote
by $A^r$ the set of complex $r$-forms over $M$ and $A^r(E)$, the set of
complex $r$-forms on $E$.

\begin{definition}
A complex linear connection on $T^{\prime }E$ is a map 
\begin{equation*}
D:\Gamma(T^{\prime }E)\times\Gamma(T^{\prime }E)\rightarrow\Gamma(T^{\prime
}E),\qquad (Z,W)\mapsto D_ZW,
\end{equation*}
such that 
\begin{equation}  \label{c.l.c.}
D_Z(fW) = (\Upsilon(Z)f)W + fD_ZW,\quad \forall f\in A^0(E),\ \forall
Z,W\in\Gamma(T^{\prime }E).
\end{equation}
\end{definition}

Locally, a section $Z\in\Gamma(T^{\prime }E)$ can be decomposed in the
adapted frame $\bigg\{\dfrac{\delta}{\delta z^k},\dfrac{\partial}{\partial
u^\alpha}\bigg\}$ of a complex nonlinear connection as introduced in the
previous section. Hence, the connection forms have vertical and horizontal
components. If the linear connection preserves the distributions from %
\eqref{7.1.4}, then it is called \emph{distinguished}. A distinguished
complex linear connection $D$ on $T^{\prime }E$ has the following
coefficients: 
\begin{equation}  \label{coef.d-clc}
D_{\delta_k}\delta_j = L^{\:i}_{jk}\delta_i,\quad
D_{\partial_\gamma}\delta_j = L^{\:i}_{j\gamma}\delta_i,\quad
D_{\delta_k}\partial_\beta = L^{\:\alpha}_{\beta k}\partial_\alpha,\quad
D_{\partial_\gamma}\partial_\beta =
C^{\:\alpha}_{\beta\gamma}\partial_\alpha.
\end{equation}

As usual, the next step is considering the torsion and curvature of such a
connection.

As usual, the torsion of a distinguished complex linear connection on $%
T^{\prime }E$ is 
\begin{equation}  \label{curb'}
T(Z,W) = D_ZW - D_WZ - [Z,W].
\end{equation}
Its coefficients are denoted by $T(\delta_h,\delta_k) = T^{\:
i}_{hk}\delta_i + T^{\:\alpha}_{hk}\delta_\alpha$, etc. They are given by 
\begin{align*}
T^{\:i}_{hk} &= L^{i}_{kh} - L^{i}_{hk}, \\
T^{\:\alpha}_{hk} &= \partial_kN^\alpha_h - \partial_hN^\alpha_k, \\
T^{\:i}_{h\alpha} &= -L^{\:i}_{h\alpha}, \\
T^{\:\beta}_{h\alpha} &= L^{\:\beta}_{\alpha h}, \\
T^{\:\gamma}_{\alpha\beta} &= C^{\:\gamma}_{\alpha\beta} -
T^{\:\gamma}_{\beta\alpha}.
\end{align*}

The curvature of a distinguished complex linear connection on $T^{\prime }E$
is defined by 
\begin{equation}  \label{tors}
R(Z,W) = D_ZD_W - D_WD_Z - D_{[Z,W]}.
\end{equation}
In the adapted frame of fields, the coefficients of the curvature are 
\begin{align*}
R^{\ i}_{jhk} &= \partial_kL^{\:i}_{jh} - \partial_hL^{\:i}_{jk} +
L^{\:l}_{jh}L^{\:i}_{lk} - L^{\:l}_{jk}L^{\:i}_{lh} -
(\partial_hN^\alpha_k-\partial_kN^\alpha_h)L^{\:i}_{j\alpha}, \\
R^{\ \alpha}_{\beta hk} &= \partial_hL^{\:\alpha}_{\beta k} -
\partial_kL^{\:\alpha}_{\beta h} + L^{\:\gamma}_{\beta
k}L^{\:\alpha}_{\gamma h} + L^{\:\gamma}_{\beta h}L^{\:\alpha}_{\gamma k} -%
\big(\partial_hN^\gamma_k - \partial_kN^\gamma_h\big)C^{\:\alpha}_{\gamma%
\beta}, \\
R^{\ \alpha}_{\gamma k\beta} &= \partial_kC^{\:\alpha}_{\gamma\beta} +
C^{\;\sigma}_{\gamma\beta}L^{\:\alpha}_{\sigma k} - L^{\:\sigma}_{\gamma
k}C^{\:\alpha}_{\sigma\beta}, \\
R^{\ i}_{hk\beta} &= \partial_kL^{\: i}_{h\beta} + L^{\: j}_{h\beta}L^{\:
i}_{jk} - L^{\: j}_{hk}L^{\: i}_{j\beta}, \\
R^{\ i}_{k\alpha\beta} &= L^{\: j}_{k\beta}L^{\: i}_{j\beta} - L^{\:
j}_{k\alpha}L^{\: i}_{j\beta}, \\
R^{\ \sigma}_{\gamma\alpha\beta} &=
C^{\:\tau}_{\gamma\beta}C^{\:\sigma}_{\tau\alpha} -
C^{\:\tau}_{\gamma\alpha}C^{\:\sigma}_{\tau\beta}.
\end{align*}

\subsubsection{Semisprays and sprays}

The notion of semispray on a holomorphic Lie algebroid has been introduced
in \cite{I} following the steps from the real case (\cite{A1,A2}). Let $%
\rho_E$ denote the anchor map and $\pi_*$, the tangent map of the projection 
$\pi$ and $\tau_E:T^{\prime }E\rightarrow E$.

\begin{definition}
A holomorphic section $S:E\rightarrow T^{\prime }E$ is called semispray if
\begin{description}
\item[i)] $\tau_E\circ S = \opn{Id}_E$,
\item[ii)] $\pi_*\circ S = \rho_E$.
\end{description}
\end{definition}

Let $c:I\rightarrow M,\ I\subset\mathbb{R}$ be a complex curve on $M$, $%
\widetilde{c}:I\rightarrow E$ a complex curve on $E$ such that $\pi\circ%
\widetilde{c} = c$ and denote by $\dot{\widetilde{c}}$ the tangent vector
field to the curve $\widetilde{c}$.

\begin{definition}
The vector field $\dot{\widetilde{c}}$ is called admissible if 
\begin{equation}
\pi_*(\dot{\widetilde{c}}) = \rho(\widetilde{c}).
\end{equation}
\end{definition}

Locally, $c(t) = (z^k(t))$, $\widetilde{c} = (z^k(t),u^\alpha(t))$ and $\dot{%
\widetilde{c}} = \dfrac{dz^k}{dt}\dfrac{\partial}{\partial z^k} + \dfrac{%
du^\alpha}{dt}\dfrac{\partial}{\partial u^\alpha}$, $t\in I$. Then, the
curve $\dot{\widetilde{c}}$ is admissible if and only if 
\begin{equation*}
\dfrac{dz^k}{dt}(t) = \rho^k_\alpha(z(t))u^\alpha(t),\ \forall t\in I.
\end{equation*}

If $S = Z^k\dfrac{\partial}{\partial z^k} + U^\alpha\dfrac{\partial}{%
\partial u^\alpha}$, then, using the definition, it follows that $S$ is a
semispray if and only if 
\begin{equation}  \label{3.5}
Z^k(z,u) = \rho^k_\alpha(z)u^\alpha.
\end{equation}
The coefficients $U^\alpha(z,u)$ are not determined, thus, for easier
computations, let $U^\alpha = -2G^\alpha$, such that 
\begin{equation}  \label{spray}
S = \rho^k_\alpha u^\alpha\dfrac{\partial}{\partial z^k} - 2G^\alpha(z,u)%
\dfrac{\partial}{\partial u^\alpha}.
\end{equation}

The rules of change for the coordinates of $S$ are obtained using the %
\eqref{7.1.2} matrix: 
\begin{equation}  \label{3.6}
\widetilde{Z}^k = \dfrac{\partial\widetilde{z}^k}{\partial z^h}Z^h
\end{equation}
and
\begin{equation}  \label{3.7}
\widetilde{G}^\alpha = M^\alpha_\beta G^\beta - \dfrac{1}{2}\dfrac{\partial
M^\alpha_\beta}{\partial z^k}u^\beta\rho^k_\gamma u^\gamma.
\end{equation}
Moreover, due to \eqref{sch.ro}, the coefficients $Z^k(z,u)$ given by %
\eqref{3.5} verify the \eqref{3.6} laws of change, which leads to the
following result.

\begin{proposition}
A vector field $S = \rho^k_\alpha u^\alpha\dfrac{\partial}{\partial z^k} -
2G^\alpha\dfrac{\partial}{\partial u^\alpha}\in\Gamma(T^{\prime }E)$ is a
semispray if and only if the coefficients $G^\alpha$ verify the \eqref{3.7}
rules of transformation.
\end{proposition}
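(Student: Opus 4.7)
The plan is to verify that the given local formula for $S$ defines a well-defined global holomorphic section of $T^{\prime}E \to E$ precisely when the $G^{\alpha}$ obey \eqref{3.7}, and then invoke the previously established equivalence between the semispray conditions (i)--(ii) and \eqref{3.5}.

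First I would observe that, written in the form $S = \rho^k_\alpha u^\alpha\,\partial_k - 2G^\alpha\,\partial_\alpha$, the conditions (i) and (ii) of the definition of a semispray hold automatically in any single chart: (i) holds because $S$ is manifestly a section of $\tau_E:T^{\prime}E\to E$, and (ii) reduces to $\pi_*(S) = \rho^k_\alpha u^\alpha\,\partial/\partial z^k = \rho_E(u^\alpha e_\alpha)$ by \eqref{5}. So the real content of the statement is that the two local expressions for $S$, written in overlapping charts $(z^k,u^\alpha)$ and $(\widetilde z^k,\widetilde u^\alpha)$, agree as a vector field on the overlap.

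Next I would substitute the change-of-frame formulas \eqref{3.1.5} into $S$ expressed in the $(z^k,u^\alpha)$ chart, obtaining
\begin{equation*}
S = \rho^k_\alpha u^\alpha\dfrac{\partial\widetilde z^h}{\partial z^k}\widetilde\partial_h + \left(\rho^k_\alpha u^\alpha u^\gamma\dfrac{\partial M^\beta_\gamma}{\partial z^k} - 2G^\alpha M^\beta_\alpha\right)\widetilde\partial_\beta,
\end{equation*}
and compare it with $S = \widetilde\rho^h_\alpha \widetilde u^\alpha\widetilde\partial_h - 2\widetilde G^\beta\widetilde\partial_\beta$ in the tilded chart. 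Matching the $\widetilde\partial_h$-coefficients and using $\widetilde u^\alpha = M^\alpha_\beta u^\beta$ together with \eqref{sch.ro} shows that the horizontal part transforms correctly without any condition on $G^\alpha$; this is precisely the remark following \eqref{3.6}. Matching the $\widetilde\partial_\beta$-coefficients then yields
\begin{equation*}
-2\widetilde G^\beta = \rho^k_\alpha u^\alpha u^\gamma\dfrac{\partial M^\beta_\gamma}{\partial z^k} - 2 M^\beta_\alpha G^\alpha,
\end{equation*}
which is exactly \eqref{3.7} after relabeling the dummy indices. The reverse direction is immediate: if the $G^\alpha$ satisfy \eqref{3.7}, the computation above shows that the two local expressions for $S$ coincide on overlaps, so $S$ is a well-defined holomorphic section, and the two semispray axioms follow from the form of $S$ as noted above.

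There is no serious obstacle; the argument is a direct index-chasing computation, and the only care required is to keep track of which indices are contracted in the cross term coming from the off-diagonal block of \eqref{7.1.2}, and to not confuse the role of $M^\alpha_\beta$ with its inverse $W^\alpha_\beta$ when invoking \eqref{sch.ro}.
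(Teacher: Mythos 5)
Your proposal is correct and follows essentially the same route as the paper: the paper obtains \eqref{3.6} and \eqref{3.7} as the transformation rules of the components of $S$ via the Jacobian \eqref{7.1.2}, and notes that $Z^k=\rho^k_\alpha u^\alpha$ automatically satisfies \eqref{3.6} by \eqref{sch.ro}, which is exactly your index computation. Your explicit remark that the semispray axioms (i)--(ii) reduce chart-wise to \eqref{3.5} is a small but accurate clarification of what the paper leaves implicit.
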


A curve $c:t\mapsto (z^i(t),u^\alpha(t))$ on $E$ is an integral curve of the
semispray $S$ if it satisfies the system of differential equations 
\begin{equation}  \label{3.8}
\dfrac{dz^i}{dt} = \rho^k_\alpha(t)u^\alpha,\qquad \dfrac{du^\alpha}{dt} +
2G^\alpha(z,u) = 0.
\end{equation}
A semispray can then be characterized also by

\begin{proposition}
A vector field on $E$ is a semispray if and only if all its integral curves
are admissible.
\end{proposition}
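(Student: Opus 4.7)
The plan is to reduce both directions to the local characterization \eqref{3.5} of a semispray, namely that the horizontal component of $S$ must equal $\rho^k_\alpha u^\alpha$. Writing $S = Z^k \partial_k + U^\alpha \partial_\alpha$ and recalling that any integral curve $c(t)=(z^i(t),u^\alpha(t))$ of $S$ satisfies $\dot z^k = Z^k(z,u)$ and $\dot u^\alpha = U^\alpha(z,u)$, admissibility of $\dot{\widetilde{c}}$ amounts, by the local description right after the definition of admissibility, precisely to $\dot z^k = \rho^k_\alpha(z)u^\alpha$. So the proof reduces to matching these two ODE systems.

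For the forward direction, if $S$ is a semispray, then by Proposition 2.3 (equivalently, by \eqref{3.5} and \eqref{spray}) the integral curves obey \eqref{3.8}; in particular $\dot z^i = \rho^i_\alpha(z)u^\alpha$, which is exactly the admissibility condition. Hence every integral curve of $S$ is admissible.

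For the converse, suppose every integral curve of $S$ is admissible. Fix an arbitrary point $(z_0,u_0)\in E$. By the standard holomorphic ODE existence theorem there is an integral curve $c$ of $S$ with $c(0)=(z_0,u_0)$. Evaluating the admissibility relation at $t=0$ yields
\begin{equation*}
Z^k(z_0,u_0) \;=\; \dot z^k(0) \;=\; \rho^k_\alpha(z_0)\,u_0^\alpha.
\end{equation*}
Since $(z_0,u_0)$ was arbitrary, $Z^k(z,u)=\rho^k_\alpha(z)u^\alpha$ identically, and writing $U^\alpha = -2G^\alpha$ puts $S$ in the form \eqref{spray}. The required transformation law \eqref{3.6} for $Z^k$ is automatic from \eqref{sch.ro} (as already noted before Proposition 2.3), so $S$ satisfies conditions \emph{i)} and \emph{ii)} of the semispray definition.

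There is no real obstacle here: the whole argument is an unwinding of definitions together with one invocation of the existence of integral curves through an arbitrary point. The only point requiring a little care is making sure that the first component equation alone is enough to conclude that $S$ is a semispray; this is ensured by Proposition 2.3, which characterizes semisprays purely in terms of the $\partial_k$-component being $\rho^k_\alpha u^\alpha$, leaving the $G^\alpha$ unrestricted.
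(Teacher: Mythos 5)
Your proof is correct and follows essentially the argument the paper intends (the paper itself omits the proof, having just set up the local characterization \eqref{3.5} of a semispray, the integral-curve equations \eqref{3.8}, and the local form of admissibility): the forward direction is an immediate comparison of these, and the converse uses existence of an integral curve through an arbitrary point to recover $Z^k=\rho^k_\alpha u^\alpha$ everywhere. No issues.
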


Now, if $h_\lambda:E\rightarrow E$ is the complex homothety $%
h_\lambda:e\mapsto \lambda e,\ \lambda\in\mathbb{C},\ e\in E$, then a
semispray $S$ on $E$ is called spray if 
\begin{equation}  \label{H}
S\circ h_\lambda = \lambda h_{\lambda,*}\circ S.
\end{equation}
Since the action of $h_\lambda$ is locally described by $h_\lambda:(z^k,u^%
\alpha)\mapsto(z^k,\lambda u^\alpha)$, the condition \eqref{H} becomes,
equivalently, 
\begin{equation}  \label{H0}
G^\alpha(z,\lambda u) = \lambda^2G^\alpha(z,u),
\end{equation}
that is, the functions $G^\alpha$ are complex homogeneous of degree $2$ in $%
u $.

Let $L = u^\alpha\dfrac{\partial}{\partial u^\alpha}$ be the complex
Liouville vector field on $E$. Then, an even simpler formulation for the
condition of spray can be obtained using Euler's theorem for homogeneous
functions: 
\begin{equation}
[L,S]_E = S.
\end{equation}

In \cite{I}, we have obtained a complex spray from the variational problem.
Following the ideas of Weinstein (\cite{W}), the Euler-Lagrange equations on 
$E$ are 
\begin{equation}  \label{v2}
\dfrac{d}{dt}\left( \frac{\partial L}{\partial u^{\beta }}\right) =
\rho_{\beta }^k\frac{\partial L}{\partial z^k} + \rho _{\beta }^{\bar{k}}%
\dfrac{\partial L}{\partial \bar{z}^k} + Q_{\beta }^{\alpha }\dfrac{\partial
L}{\partial u^{\alpha }} + Q_{\beta }^{\bar{\alpha}}\dfrac{\partial L}{%
\partial \bar{u}^{\alpha }},
\end{equation}
where $\rho_{\beta }^{\bar{k}} = 0$ since $E$ is holomorphic and $Q_{\beta
}^{\alpha }$ and $Q_{\beta }^{\bar{\alpha}}$ must be determined.

\begin{theorem}
On a holomorphic Lie algebroid $E$ endowed with a regular Lagrangian $L(z,u)$
and a Hermitian metric tensor $g_{\bar{\alpha}\beta}$ with $\det(g_{\bar{%
\alpha}\beta})\neq 0$, a complex canonical spray is given by 
\begin{equation}  \label{s.can.}
G^{\alpha } = \dfrac{1}{2}\left( g^{\bar{\beta}\alpha }\dfrac{\partial ^{2}L%
}{\partial z^k\partial \bar{u}^{\beta }} + \dfrac{1}{2}W_{\varepsilon
}^{\alpha}\dfrac{\partial M_{\beta }^{\varepsilon }}{\partial z^k}u^{\beta
}\right)\rho _{\gamma }^k u^{\gamma }
\end{equation}
\end{theorem}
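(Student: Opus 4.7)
My plan is to derive \eqref{s.can.} by substituting the admissible-curve conditions $dz^k/dt=\rho^k_\alpha u^\alpha$ and $du^\alpha/dt=-2G^\alpha$ (from \eqref{3.8}) directly into the Euler--Lagrange system \eqref{v2} and then matching coefficients on both sides.

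Differentiating $\partial L/\partial u^\beta$ along an admissible curve via the chain rule, and using the identification of the Hermitian metric as $g_{\beta\bar\alpha}=\partial^{2}L/(\partial u^{\beta}\partial\bar u^{\alpha})$, one obtains
\begin{equation*}
\dfrac{d}{dt}\!\left(\dfrac{\partial L}{\partial u^\beta}\right)=\rho^k_\gamma u^\gamma\dfrac{\partial^{2}L}{\partial z^{k}\partial u^{\beta}}+\rho^{\bar k}_{\bar\gamma}\bar u^\gamma\dfrac{\partial^{2}L}{\partial\bar z^{k}\partial u^{\beta}}-2G^{\alpha}\dfrac{\partial^{2}L}{\partial u^{\alpha}\partial u^{\beta}}-2\bar G^{\alpha}g_{\beta\bar\alpha}.
\end{equation*}
Substituting this into \eqref{v2} and invoking the holomorphy identity $\rho^{\bar k}_{\beta}=0$ from Section 1, I would separate the resulting identity into the pieces carried by $\partial L/\partial z^k$, $\partial L/\partial \bar z^k$, $\partial L/\partial u^\alpha$, and $\partial L/\partial\bar u^\alpha$. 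The unknowns $Q^\alpha_\beta$ and $Q^{\bar\alpha}_\beta$ are then fixed precisely so that the terms proportional to $\partial L/\partial u^\alpha$ (respectively $\partial L/\partial\bar u^\alpha$) absorb the pure second-derivative contribution $\partial^{2}L/(\partial u^{\alpha}\partial u^{\beta})$ and its conjugate; what survives is a linear equation in $\bar G^\alpha$ featuring the non-degenerate Hermitian form $g_{\beta\bar\alpha}$ and the mixed derivative $\rho^k_\gamma u^\gamma\,\partial^{2}L/(\partial z^{k}\partial u^{\beta})$.

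Inverting $(g_{\beta\bar\alpha})$ to get $g^{\bar\beta\alpha}$ and taking complex conjugates, the surviving relation yields the first summand of \eqref{s.can.}. The correction $\tfrac12 W^\alpha_\varepsilon(\partial M^\varepsilon_\beta/\partial z^k)u^\beta\rho^k_\gamma u^\gamma$ is needed because $\partial L/\partial u^\beta$ is not a section of a tensor bundle: under a frame change $\tilde e_\alpha=W^\beta_\alpha e_\beta$ it picks up an inhomogeneous piece controlled by $\partial M^\alpha_\beta/\partial z^k$ (compare the Jacobian \eqref{3.1.5}), and that piece contributes the extra term required for the resulting $G^\alpha$ to obey the semispray transformation law \eqref{3.7}.

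The main obstacle is therefore not the linear-algebraic inversion but the bookkeeping in the second step --- choosing $Q^\alpha_\beta$ and $Q^{\bar\alpha}_\beta$ consistently and tracking which pieces of the Euler--Lagrange identity are tensorial and which are not. I would close the argument by verifying \eqref{3.7} directly for \eqref{s.can.}, using \eqref{sch.ro} together with differentiation of the identity $M^\alpha_\beta W^\beta_\gamma=\delta^\alpha_\gamma$ so that the cross-terms cancel the non-tensorial contributions and leave exactly the semispray transformation rule.
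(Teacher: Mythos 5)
You should first note that the paper does not actually prove this theorem: it writes down the variational setup \eqref{v2}, leaves $Q^\alpha_\beta$ and $Q^{\bar\alpha}_\beta$ "to be determined", and defers the derivation to \cite{I}. So your proposal can only be measured against that sketch, and your overall strategy is indeed the one the paper intends --- substitute the admissible-curve conditions \eqref{3.8} into \eqref{v2}, use nondegeneracy of the Hermitian metric to solve for the spray coefficients, and close by checking the transformation law \eqref{3.7}, which is where the non-tensorial summand $\tfrac14 W^\alpha_\varepsilon(\partial M^\varepsilon_\beta/\partial z^k)u^\beta\rho^k_\gamma u^\gamma$ has to come from. Identifying \eqref{3.7} as the real content of the word "canonical" is the right instinct.

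Two concrete problems remain. First, a conjugation slip: you isolate a linear equation for $\bar G^\alpha$ whose source term is $\rho^k_\gamma u^\gamma\,\partial^2L/(\partial z^k\partial u^\beta)$, but the complex conjugate of that expression is $\overline{\rho^k_\gamma u^\gamma}\,\partial^2L/(\partial\bar z^k\partial\bar u^\beta)$, which is not the mixed derivative $\partial^2L/(\partial z^k\partial\bar u^\beta)\,\rho^k_\gamma u^\gamma$ appearing in \eqref{s.can.}. The term that must survive in the $\bar G^\alpha$-equation is the other one from your own chain rule, namely $\overline{\rho^k_\gamma u^\gamma}\,\partial^2L/(\partial\bar z^k\partial u^\beta)$; equivalently, work with the conjugated Euler--Lagrange equation and read off $G^\alpha$ directly from the coefficient $-2G^\alpha g_{\bar\beta\alpha}$. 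Second, the absorption step is not available for a general regular $L$: the leftover quantity $\rho^k_\beta\,\partial L/\partial z^k-\rho^k_\gamma u^\gamma\,\partial^2L/(\partial z^k\partial u^\beta)-2G^\alpha\,\partial^2L/(\partial u^\alpha\partial u^\beta)$ is not a linear combination of $\partial L/\partial u^\alpha$ and $\partial L/\partial\bar u^\alpha$, so "fixing $Q^\alpha_\beta$, $Q^{\bar\alpha}_\beta$ to absorb it" is either an additional hypothesis on $L$ (a weakly K\"ahler-type condition, as in the $T^{\prime}M$ theory of \cite{Mu}) or merely an a posteriori definition of the $Q$'s, in which case the variational computation determines nothing and the entire burden of proof falls on the direct verification of \eqref{3.7} that you postpone to the last step. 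That verification --- differentiating $M^\alpha_\beta W^\beta_\gamma=\delta^\alpha_\gamma$ and using \eqref{sch.ro} so the inhomogeneous pieces cancel --- should be promoted to the body of the proof, and the status of the $Q$'s stated explicitly.
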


\begin{remark}
If the Lagrangian on $E$ is complex homogeneous, then the spray is complex
homogeneous of degree $2$ in $u$.
\end{remark}

\subsection{The prolongation of a holomorphic Lie algebroid}

\label{S5}

For the holomorphic Lie algebroid $E$ over a complex manifold $M$, its
prolongation will be introduced using the tangent mapping $\pi_*:T^{\prime
}E\rightarrow T^{\prime }M$ and the holomorphic anchor map $%
\rho_E:E\rightarrow T^{\prime }M$. Define the subset $\mathcal{T}^{\prime }E$
of $E\times T^{\prime }E$ by $\mathcal{T}^{\prime }E = \{(e,v)\in E\times
T^{\prime }E | \rho(e)=\pi_*(v)\}$ and the mapping $\pi_{\mathcal{T}}:%
\mathcal{T}^{\prime }E\rightarrow E$, given by $\pi_{\mathcal{T}}(e,v) =
\pi_E(v)$, where $\pi_E:T^{\prime }E\rightarrow E$ is the tangent
projection. Then $(\mathcal{T}^{\prime }E,\pi_{\mathcal{T}},E)$ is a
holomorphic vector bundle over $E$, of rank $2m$. Moreover, it is easy to
verify that the projection onto the second factor $\rho_{\mathcal{T}}:%
\mathcal{T}^{\prime }E\rightarrow T^{\prime }E$, $\rho_{\mathcal{T}}(e,v) =
v $, is the anchor of a new holomorphic Lie algebroid over the complex
manifold $E$ (see \cite{Ma1,Ma2,P1} for details in the real case).

The vertical subbundle of the prolongation is defined using the projection
onto the first factor $\tau_1:\mathcal{T}^{\prime }E\rightarrow E$, $%
\tau_1(e,v) = e$, by
\begin{equation*}
V\mathcal{T}^{\prime }E = \ker\tau_1 = \{(e,v)\in\mathcal{T}^{\prime }E\ |\
\tau_1(e,v) = 0\}.
\end{equation*}
From the construction above, it follows that any element of $V\mathcal{T}%
^{\prime }E$ has the form $(0,v)\in E\times\mathcal{T}^{\prime }E$, with $%
\pi_*(v) = 0$. Then, vertical elements $(0,v)\in V\mathcal{T}^{\prime }E$
have the property $v\in\ker\pi_*$ and $v$ is a vertical vector on $E$.

The local coordinates on $\mathcal{T}^{\prime }E$ are $(z^k,u^\alpha,v^%
\alpha,w^\alpha)$, obtained from the local coordinates $(z^k,u^\alpha)$ of $%
e $ by using the identity $\rho(e) = \pi_*(v)$, which yields the vector $v$
in the form 
\begin{equation*}
v = \rho^k_\alpha v^\alpha\dfrac{\partial}{\partial z^k} + w^\alpha\dfrac{%
\partial}{\partial u^\alpha}.
\end{equation*}

The local basis of holomorphic sections in $\Gamma(\mathcal{T}^{\prime }E)$
is $\{\mathcal{Z}_\alpha,\mathcal{V}_\alpha,\}$, defined by 
\begin{equation*}
\mathcal{Z}_\alpha(e) = \bigg( s_\alpha(\pi(e)), \rho^k_\alpha\dfrac{\partial%
}{\partial z^k}\bigg|_e \bigg),\qquad \mathcal{V}_\alpha(e) = \bigg( 0, 
\dfrac{\partial}{\partial u^\alpha}\bigg|_e \bigg),
\end{equation*}
where $\bigg\{\dfrac{\partial}{\partial z^k}, \dfrac{\partial}{\partial
u^\alpha}\bigg\}$ is the natural frame on $T^{\prime }E$.

If $W$ is a holomorphic section of $\mathcal{T}^{\prime }E$, then its
decomposition in the basis $\{\mathcal{Z}_\alpha,\mathcal{V}_\alpha\}$ is 
\begin{equation*}
W = Z^\alpha\mathcal{Z}_\alpha + V^\alpha\mathcal{V}_\alpha,
\end{equation*}
where $Z^\alpha$ and $V^\alpha$ are holomorphic functions of $z$ and $u$.

Also, the holomorphic vector field $\rho_{\mathcal{T}}(W)\in\Gamma(T^{\prime
}E)$ can be written as 
\begin{equation*}
\rho_{\mathcal{T}}(W) = \rho^k_\alpha Z^\alpha(z,u)\dfrac{\partial}{\partial
z^k}\bigg|_{(z,u)} + V^\alpha(z,u)\dfrac{\partial}{\partial u^\alpha}\bigg|%
_{(z,u)}.
\end{equation*}

A section $Z\in\Gamma(E)$ can be lifted to sections of the prolongation $%
\mathcal{T}^{\prime }E$ by considering its vertical and complete lifts $Z^v$
and $Z^c$, which will be defined in the following (see \cite{I}). The
vertical lift of a section $Z\in\Gamma(E)$, $Z=Z^\alpha s_\alpha$, is a
vector field on $E$ given by 
\begin{equation}  \label{7}
Z^v(z,u) = Z^\alpha(z)\dfrac{\partial}{\partial u^\alpha}.
\end{equation}
In particular, $s^v_\alpha = \dfrac{\partial}{\partial u^\alpha}$.

The complete lift $Z^c$ of a section $Z\in\Gamma(E)$ is a vector field on $E$
defined by 
\begin{equation}  \label{9}
Z^c(z,u) = Z^\alpha\rho^k_\alpha\dfrac{\partial}{\partial z^k} + \bigg( %
\rho^k_\beta\dfrac{\partial Z^\alpha}{\partial z^k} - Z^\gamma
C^{\:\alpha}_{\gamma\beta} \bigg) u^\beta \dfrac{\partial}{\partial u^\alpha}%
.
\end{equation}

In particular, $s^c_\alpha = \rho^k_\alpha\dfrac{\partial}{\partial z^k} -
C^{\:\gamma}_{\alpha\beta} u^\beta \dfrac{\partial}{\partial u^\gamma}$.

The lifts on $\mathcal{T}^{\prime }E$ are defined as 
\begin{equation*}
Z^V(e) = (0,Z^v(e)),\qquad Z^C(e) = (Z(\pi(e)),Z^c(e)),\qquad e\in E.
\end{equation*}
In local coordinates, if $Z = Z^\alpha s_\alpha$, then the expressions of $%
Z^V$ and $Z^C$ are 
\begin{equation*}
Z^V = Z^\alpha\mathcal{V}_\alpha,\qquad Z^C = Z^\alpha\mathcal{Z}_\alpha + %
\bigg( \rho^k_\beta\dfrac{\partial Z^\alpha}{\partial z^k} - Z^\gamma
C^{\:\alpha}_{\gamma\beta} \bigg) u^\beta\mathcal{V}_\alpha.
\end{equation*}
In particular, $s^V_\alpha = \mathcal{V}_\alpha$ and $s^C_\alpha = \mathcal{Z%
}_\alpha - C^{\:\beta}_{\alpha\gamma}u^\gamma\mathcal{V}_\beta$.

The Lie bracket $[\cdot,\cdot]_{\mathcal{T}}$ on $\mathcal{T}^{\prime }E$
satisfies the identities 
\begin{equation*}
[Z^V,W^V]_{\mathcal{T}} = 0,\qquad [Z^V,W^C]_{\mathcal{T}} =
[Z,W]^V_E,\qquad [Z^C,W^C]_{\mathcal{T}} = [Z,W]^C_E
\end{equation*}
for $Z,W\in\Gamma(E)$. The structure of a holomorphic Lie algebroid on the
vector bundle $(\mathcal{T}^{\prime }E,\pi_{\mathcal{T}},E)$ is therefore
given by $([\cdot,\cdot]_{\mathcal{T}},\rho_{\mathcal{T}})$. The action of
the anchor $\rho_{\mathcal{T}}$ on $\mathcal{T}^{\prime }E$ is locally
described by 
\begin{equation*}
\rho_{\mathcal{T}}(\mathcal{Z}_\alpha) = \rho^k_\alpha\dfrac{\partial}{%
\partial z^k},\qquad \rho_{\mathcal{T}}(\mathcal{V}_\alpha) = \dfrac{\partial%
}{\partial u^\alpha}.
\end{equation*}

\begin{lemma}
The Lie brackets of the basis $\{\mathcal{Z}_\alpha,\mathcal{V}_\alpha\}$
are: 
\begin{equation*}
[\mathcal{Z}_\alpha,\mathcal{Z}_\beta]_\mathcal{T} =
C^{\:\gamma}_{\alpha\beta}\mathcal{Z}_\gamma,\qquad [\mathcal{Z}_\alpha,%
\mathcal{V}_\beta]_\mathcal{T} = 0,\qquad [\mathcal{V}_\alpha,\mathcal{V}%
_\beta]_\mathcal{T} = 0.
\end{equation*}
\end{lemma}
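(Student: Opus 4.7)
The plan is to decompose $\mathcal{V}_\alpha$ and $\mathcal{Z}_\alpha$ into vertical and complete lifts via the identities $\mathcal{V}_\alpha = s^V_\alpha$ and $\mathcal{Z}_\alpha = s^C_\alpha + C^{\:\gamma}_{\alpha\sigma} u^\sigma \mathcal{V}_\gamma$ recorded just above the statement, and then to evaluate each bracket using (a) the three lift identities $[Z^V,W^V]_\mathcal{T}=0$, $[Z^V,W^C]_\mathcal{T}=[Z,W]^V_E$, $[Z^C,W^C]_\mathcal{T}=[Z,W]^C_E$, and (b) the Leibniz rule $[X,fY]_\mathcal{T} = f[X,Y]_\mathcal{T} + \rho_\mathcal{T}(X)(f)\,Y$ of the Lie algebroid $(\mathcal{T}'E,[\cdot,\cdot]_\mathcal{T},\rho_\mathcal{T})$, together with the local description $\rho_\mathcal{T}(\mathcal{Z}_\alpha) = \rho^k_\alpha\,\partial/\partial z^k$ and $\rho_\mathcal{T}(\mathcal{V}_\alpha) = \partial/\partial u^\alpha$.

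The two easier brackets then follow almost at once. For the $\mathcal{V}$--$\mathcal{V}$ case, $[\mathcal{V}_\alpha, \mathcal{V}_\beta]_\mathcal{T} = [s^V_\alpha, s^V_\beta]_\mathcal{T} = 0$ directly. For the $\mathcal{Z}$--$\mathcal{V}$ case, substitution yields $[s^C_\alpha, s^V_\beta]_\mathcal{T} + [C^{\:\gamma}_{\alpha\sigma} u^\sigma \mathcal{V}_\gamma, \mathcal{V}_\beta]_\mathcal{T}$; the first summand equals $[s_\alpha,s_\beta]^V_E = C^{\:\gamma}_{\alpha\beta} \mathcal{V}_\gamma$, while the second, via Leibniz applied with $\rho_\mathcal{T}(\mathcal{V}_\beta) = \partial/\partial u^\beta$, collapses to $-C^{\:\gamma}_{\alpha\beta} \mathcal{V}_\gamma$ because $C^{\:\gamma}_{\alpha\sigma}$ depends only on $z$. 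The two contributions cancel.

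The substantive computation is $[\mathcal{Z}_\alpha, \mathcal{Z}_\beta]_\mathcal{T}$. I would expand it into four summands: (i) $[s^C_\alpha, s^C_\beta]_\mathcal{T} = (C^{\:\gamma}_{\alpha\beta} s_\gamma)^C$, which by the explicit complete-lift formula for a section with $z$-dependent components produces $C^{\:\gamma}_{\alpha\beta} \mathcal{Z}_\gamma$ plus a vertical tail with coefficients $\rho^k_\sigma \partial C^{\:\gamma}_{\alpha\beta}/\partial z^k$ and $-C^{\:\epsilon}_{\alpha\beta} C^{\:\gamma}_{\epsilon\sigma}$; (ii) and (iii), the two cross brackets such as $[s^C_\alpha, C^{\:\delta}_{\beta\tau} u^\tau \mathcal{V}_\delta]_\mathcal{T}$, computed by Leibniz together with $[s^C_\alpha, \mathcal{V}_\delta]_\mathcal{T} = C^{\:\epsilon}_{\alpha\delta} \mathcal{V}_\epsilon$ and $\rho_\mathcal{T}(s^C_\alpha) = \rho^k_\alpha\,\partial/\partial z^k - C^{\:\gamma}_{\alpha\sigma} u^\sigma\,\partial/\partial u^\gamma$; and (iv) the fully vertical bracket $[C^{\:\gamma}_{\alpha\sigma} u^\sigma \mathcal{V}_\gamma, C^{\:\delta}_{\beta\tau} u^\tau \mathcal{V}_\delta]_\mathcal{T}$, evaluated by a double Leibniz. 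Only (i) contributes a $\mathcal{Z}$-component, namely the desired $C^{\:\gamma}_{\alpha\beta} \mathcal{Z}_\gamma$.

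The main obstacle is verifying that the vertical remainders in (i)--(iv) cancel. After collecting like terms, the total coefficient of $u^\tau \mathcal{V}_\delta$ has an anchor-derivative part $\rho^k_\tau \partial C^{\:\delta}_{\alpha\beta}/\partial z^k + \rho^k_\alpha \partial C^{\:\delta}_{\beta\tau}/\partial z^k - \rho^k_\beta \partial C^{\:\delta}_{\alpha\tau}/\partial z^k$ and a quadratic-in-$C$ part $-C^{\:\epsilon}_{\alpha\beta} C^{\:\delta}_{\epsilon\tau} - C^{\:\epsilon}_{\beta\tau} C^{\:\delta}_{\epsilon\alpha} + C^{\:\epsilon}_{\alpha\tau} C^{\:\delta}_{\epsilon\beta}$, and these are precisely the two sides of the Jacobi identity for $(E,[\cdot,\cdot]_E,\rho_E)$ applied to the triple $(s_\alpha, s_\beta, s_\tau)$. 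Hence the vertical remainders vanish identically and we obtain $[\mathcal{Z}_\alpha, \mathcal{Z}_\beta]_\mathcal{T} = C^{\:\gamma}_{\alpha\beta} \mathcal{Z}_\gamma$, completing the proof.
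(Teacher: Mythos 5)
Your proposal is correct, and in fact the paper states this lemma without any proof at all, so there is nothing to compare it against line by line. I checked the substantive step: expanding $[\mathcal{Z}_\alpha,\mathcal{Z}_\beta]_\mathcal{T}$ via $\mathcal{Z}_\alpha = s^C_\alpha + C^{\:\gamma}_{\alpha\sigma}u^\sigma\mathcal{V}_\gamma$, the four summands do produce, besides $C^{\:\gamma}_{\alpha\beta}\mathcal{Z}_\gamma$, a vertical remainder whose coefficient of $u^\tau\mathcal{V}_\delta$ is exactly
$\rho^k_\alpha\,\partial_k C^{\:\delta}_{\beta\tau} - \rho^k_\beta\,\partial_k C^{\:\delta}_{\alpha\tau} + \rho^k_\tau\,\partial_k C^{\:\delta}_{\alpha\beta} - C^{\:\epsilon}_{\beta\tau}C^{\:\delta}_{\epsilon\alpha} + C^{\:\epsilon}_{\alpha\tau}C^{\:\delta}_{\epsilon\beta} - C^{\:\epsilon}_{\alpha\beta}C^{\:\delta}_{\epsilon\tau}$,
which is the local form of the Jacobi identity \eqref{3} for $(E,[\cdot,\cdot]_E)$ applied to $(s_\alpha,s_\beta,s_\tau)$ and hence vanishes; your cancellations in the other two brackets are likewise correct. (Your phrase ``the two sides of the Jacobi identity'' is slightly loose --- it is the full Jacobi sum that vanishes --- but the content is right.) One remark: your route uses only the data the paper explicitly records (the lift bracket identities and the Leibniz rule), which is the honest derivation given the paper's setup; a shorter alternative is to compute componentwise in $E\times T^{\prime}E$, where $[\mathcal{Z}_\alpha,\mathcal{Z}_\beta]$ reduces to $\big([s_\alpha,s_\beta]_E,\ [\rho^k_\alpha\partial_k,\rho^h_\beta\partial_h]\big)$ and the second slot equals $\rho^h_\gamma C^{\:\gamma}_{\alpha\beta}\partial_h$ by the first structure identity of Proposition 1.1, giving the result in one line --- but that presupposes the componentwise description of $[\cdot,\cdot]_\mathcal{T}$, which the paper never states. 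Your proof stands as a complete justification of an otherwise unproved lemma.
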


As in the real case (\cite{Ma1}), a differential $\partial_{\mathcal{T}}$
can be defined on $\mathcal{T}^{\prime }E$. Denoting by $\{\mathcal{Z}%
^\alpha,\mathcal{V}^\alpha\}$ the dual base of $\{\mathcal{Z}_\alpha,%
\mathcal{V}_\alpha\}$, then 
\begin{equation*}
\partial_{\mathcal{T}}z^k = \rho^k_\alpha\mathcal{Z}^\alpha,\qquad \partial_{%
\mathcal{T}}u^\alpha = \mathcal{V}^\alpha
\end{equation*}
and 
\begin{equation*}
\partial_{\mathcal{T}}\mathcal{Z}^\alpha = -\dfrac{1}{2}C^{\:\alpha}_{\beta%
\gamma}\mathcal{Z}^\beta\wedge\mathcal{Z}^\gamma,\qquad \partial_{\mathcal{T}%
}\mathcal{V}^\alpha = 0.
\end{equation*}

As announced in the previous section, the notion of semispray for a Lie
algebroid can be introduced in another manner as well (\cite{Ma1,P1}). More
precisely, a semispray can also be considered on the prolongation $\mathcal{T%
}^{\prime }E$ of the holomorphic Lie algebroid $E$. Let $\mathcal{L}$ be the
complex Liouville section on $\mathcal{T}^{\prime }E$, defined by 
\begin{equation}
\mathcal{L}(e) = (0,e^V_e),\quad e\in E.
\end{equation}
The coordinate expression of $\mathcal{L}$ is 
\begin{equation}  \label{s1}
\mathcal{L} = u^\alpha\mathcal{V}_\alpha.
\end{equation}

Also, let $T$ be the tangent structure (or vertical endomorphism)
defined on $\mathcal{T}^{\prime }E$ by 
\begin{equation}  \label{s2}
T(Z^C) = Z^V,\qquad T(Z^V) = 0.
\end{equation}
In local coordinates, 
\begin{equation}  \label{str.tg.}
T = \mathcal{Z}^\alpha\otimes\mathcal{V}_\alpha,
\end{equation}
which yields 
\begin{equation}  \label{act.str.tg.}
T(\mathcal{Z}_\alpha) = \mathcal{V}_\alpha,\qquad T(\mathcal{V}_\alpha) = 0.
\end{equation}

\begin{lemma}
If $T$ is the complex tangent structure on $\mathcal{T}^{\prime }E$ and $C$
is the complex Liouville section, then 
\begin{equation}
T^2 = 0,\qquad \opn{Im}T = \ker T = V\mathcal{T}^{\prime }E,\qquad [%
\mathcal{L},T]_\mathcal{T} = -T.
\end{equation}
\end{lemma}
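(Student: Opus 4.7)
The first two identities are essentially bookkeeping on the local expression (\ref{act.str.tg.}). I would simply observe that $T(\mathcal{Z}_\alpha) = \mathcal{V}_\alpha$ and $T(\mathcal{V}_\alpha) = 0$, so $T^2(\mathcal{Z}_\alpha) = T(\mathcal{V}_\alpha) = 0$ and $T^2(\mathcal{V}_\alpha) = 0$, giving $T^2 = 0$. The image of $T$ is locally spanned by $\{\mathcal{V}_\alpha\}$, and $T$ kills exactly the combinations involving only $\mathcal{V}_\alpha$; hence both $\opn{Im} T$ and $\ker T$ coincide with the subbundle locally spanned by $\{\mathcal{V}_\alpha\}$, which by the definition via $\tau_1$ is exactly $V\mathcal{T}^{\prime}E$.

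For the identity $[\mathcal{L}, T]_\mathcal{T} = -T$, I would read the left-hand side as the Lie derivative of the $(1,1)$-tensor $T$ along the section $\mathcal{L}$, so that for every $W \in \Gamma(\mathcal{T}^{\prime}E)$
\[
[\mathcal{L}, T]_\mathcal{T}(W) = [\mathcal{L}, T(W)]_\mathcal{T} - T\!\left([\mathcal{L}, W]_\mathcal{T}\right).
\]
Since both sides are $\mathcal{H}(E)$-linear in $W$, it suffices to check the equality on the local basis $\{\mathcal{Z}_\alpha, \mathcal{V}_\alpha\}$.

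The core of the proof is therefore the computation of $[\mathcal{L}, \mathcal{Z}_\alpha]_\mathcal{T}$ and $[\mathcal{L}, \mathcal{V}_\alpha]_\mathcal{T}$. Writing $\mathcal{L} = u^\beta \mathcal{V}_\beta$ and using the algebroid Leibniz rule $[fX, Y]_\mathcal{T} = f[X, Y]_\mathcal{T} - \rho_\mathcal{T}(Y)(f)\, X$ together with the brackets of the basis from the preceding lemma and $\rho_\mathcal{T}(\mathcal{Z}_\alpha) = \rho^k_\alpha \partial/\partial z^k$, $\rho_\mathcal{T}(\mathcal{V}_\alpha) = \partial/\partial u^\alpha$, I expect $[\mathcal{L}, \mathcal{Z}_\alpha]_\mathcal{T} = 0$ (because $\rho_\mathcal{T}(\mathcal{Z}_\alpha)$ only differentiates in $z$ and $u^\beta$ is independent of $z^k$) and $[\mathcal{L}, \mathcal{V}_\alpha]_\mathcal{T} = -\mathcal{V}_\alpha$ (from $\partial u^\beta/\partial u^\alpha = \delta^\beta_\alpha$). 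Substituting into the Lie derivative formula yields $[\mathcal{L}, T]_\mathcal{T}(\mathcal{Z}_\alpha) = [\mathcal{L}, \mathcal{V}_\alpha]_\mathcal{T} - T(0) = -\mathcal{V}_\alpha = -T(\mathcal{Z}_\alpha)$ and $[\mathcal{L}, T]_\mathcal{T}(\mathcal{V}_\alpha) = 0 - T(-\mathcal{V}_\alpha) = 0 = -T(\mathcal{V}_\alpha)$, which is the desired identity.

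The only obstacle worth naming is notational: fixing the correct meaning of $[\mathcal{L}, T]_\mathcal{T}$ (Lie derivative of a $(1,1)$-tensor along an algebroid section, equivalently the Frölicher--Nijenhuis bracket of a section with a vector-valued $1$-form) and keeping the signs straight in the Leibniz rule. No heavier machinery is required once this convention is fixed.
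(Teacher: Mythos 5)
Your proposal is correct, and it is the standard direct verification: the paper states this lemma without any proof, so your computation (basis evaluation for $T^2=0$ and $\operatorname{Im}T=\ker T=V\mathcal{T}^{\prime}E$, plus the Lie-derivative reading of $[\mathcal{L},T]_{\mathcal{T}}$ together with $[\mathcal{L},\mathcal{Z}_\alpha]_{\mathcal{T}}=0$ and $[\mathcal{L},\mathcal{V}_\alpha]_{\mathcal{T}}=-\mathcal{V}_\alpha$) supplies exactly the argument the authors implicitly rely on. The signs and the tensoriality check are all in order.
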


These two canonical complex objects on $\mathcal{T}^{\prime }E$ can now be
used for defining the notion of complex semispray on the prolongation of $E$.

\begin{definition}
A section $\mathcal{S}$ of the holomorphic Lie algebroid $\mathcal{T}%
^{\prime }E$ is called complex semispray on $E$ if 
\begin{equation*}
T(\mathcal{S}) = \mathcal{L}.
\end{equation*}
\end{definition}

In order to describe locally a complex semispray on $\mathcal{T}^{\prime }E$%
, let $\mathcal{S} = A^\alpha\mathcal{Z}_\alpha + B^\alpha\mathcal{V}_\alpha$%
. Then, \eqref{s1} and \eqref{act.str.tg.} yield $A^\alpha = u^\alpha$, and
for convenience let $B^\alpha = -2G^\alpha$. Therefore, the local expression
of a semispray on $\mathcal{T}^{\prime }E$ is 
\begin{equation*}
\mathcal{S} = u^\alpha\mathcal{Z}_\alpha - 2G^\alpha(z,u)\mathcal{V}_\alpha.
\end{equation*}

If $[\mathcal{L},\mathcal{S}]_\mathcal{T} = \mathcal{S}$, then $\mathcal{S}$
is called spray and $G^\alpha$ are homogeneous functions of degree $2$.

\subsubsection{Nonlinear connections on $\mathcal{T}^{\prime }E$}

\label{S6}

The method of nonlinear connection discussed in Section \ref{Ss3.2} will be
applied here for the prolongation $\mathcal{T}^{\prime }E$ of the
holomorphic Lie algebroid $E$. A complex nonlinear connection on $\mathcal{T}%
^{\prime }E$ is given by a complex vector subbundle $H\mathcal{T}^{\prime }E$
of $\mathcal{T}^{\prime }E$ such that $\mathcal{T}^{\prime }E = H\mathcal{T}%
^{\prime }E \oplus V\mathcal{T}^{\prime }E$. If $l^h$ is the horizontal
lift, then similar considerations as in the real case (\cite{P1}) lead to
the following local expression of
\begin{equation*}
l^h(\mathcal{Z}_\alpha) = \mathcal{Z}_\alpha - N^\beta_\alpha\mathcal{V}%
_\beta,\qquad l^h(\mathcal{V}_\alpha) = 0,
\end{equation*}
where $N^\beta_\alpha = N^\beta_\alpha(z,u)$ are functions defined on $E$,
called the coefficients of the complex nonlinear connection on $\mathcal{T}%
^{\prime }E$.

Denote by
\begin{equation}  \label{7.1}
\delta_\alpha = \mathcal{Z}_\alpha - N^\beta_\alpha\mathcal{V}_\beta
\end{equation}
in order to obtain a local frame $\{\delta_\alpha,\mathcal{V}_\alpha\}$ on $%
\mathcal{T}^{\prime }E$, called the \emph{adapted frame} with respect to the
complex nonlinear connection $N$ on $\mathcal{T}^{\prime }E$. Then 
\begin{equation}  \label{7.2}
\rho_{\mathcal{T}}(\delta_\alpha) = \rho^k_\alpha\dfrac{\partial}{\partial
z^k} - N^\beta_\alpha\dfrac{\partial}{\partial u^\beta},\qquad \rho_{%
\mathcal{T}}(\mathcal{V}_\alpha) = \dfrac{\partial}{\partial u^\alpha}.
\end{equation}

The dual of the adapted frame of fields is $\{\mathcal{Z}^\alpha,\delta%
\mathcal{V}^\alpha\}$, where 
\begin{equation*}
\delta\mathcal{V}^\alpha = \mathcal{V}^\alpha + N^\alpha_\beta\mathcal{Z}%
^\beta
\end{equation*}
and $\{\mathcal{Z}^\alpha,\mathcal{V}^\alpha\}$ is the dual basis of $\{%
\mathcal{Z}_\alpha,\mathcal{V}_\alpha\}$.

\begin{proposition}
The Lie brackets of the adapted frame $\{\delta_\alpha,\mathcal{V}_\alpha\}$
are 
\begin{align*}
[\delta_\alpha,\delta_\beta]_{\mathcal{T}} &=
C^{\:\gamma}_{\alpha\beta}\delta_\gamma + \mathcal{R}^{\:\gamma}_{\alpha%
\beta}\mathcal{V}_\gamma, \\
[\delta_\alpha,\mathcal{V}_\beta]_{\mathcal{T}} &= \dfrac{\partial
N^\gamma_\alpha}{\partial u^\beta}\mathcal{V}_\beta, \\
[\mathcal{V}_\alpha,\mathcal{V}_\beta]_{\mathcal{T}} &= 0,
\end{align*}
where 
\begin{equation*}
\mathcal{R}^{\:\gamma}_{\alpha\beta} =
C^{\:\varepsilon}_{\alpha\beta}N^\gamma_\varepsilon + \rho^k_\beta\dfrac{%
\partial N^\gamma_\alpha}{\partial z^k} - \rho^k_\alpha\dfrac{\partial
N^\gamma_\beta}{\partial z^k} - N^\varepsilon_\beta\dfrac{\partial
N^\gamma_\alpha}{\partial u^\varepsilon} + N^\varepsilon_\alpha\dfrac{%
\partial N^\gamma_\beta}{\partial u^\varepsilon}.
\end{equation*}
\end{proposition}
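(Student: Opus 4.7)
The plan is to expand each bracket by bilinearity using the Leibniz rule
\[
[X,fY]_{\mathcal{T}} = f\,[X,Y]_{\mathcal{T}} + \rho_{\mathcal{T}}(X)(f)\,Y,
\]
which holds on any Lie algebroid, and then substitute the basis brackets
\[
[\mathcal{Z}_\alpha,\mathcal{Z}_\beta]_{\mathcal{T}} = C^{\:\gamma}_{\alpha\beta}\mathcal{Z}_\gamma, \qquad
[\mathcal{Z}_\alpha,\mathcal{V}_\beta]_{\mathcal{T}} = 0, \qquad
[\mathcal{V}_\alpha,\mathcal{V}_\beta]_{\mathcal{T}} = 0
\]
from the preceding lemma, together with the local action
$\rho_{\mathcal{T}}(\mathcal{Z}_\alpha)=\rho^k_\alpha\partial/\partial z^k$ and $\rho_{\mathcal{T}}(\mathcal{V}_\alpha)=\partial/\partial u^\alpha$.

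First I would dispatch the easy two. For $[\mathcal{V}_\alpha,\mathcal{V}_\beta]_{\mathcal{T}}$ there is nothing to do. For $[\delta_\alpha,\mathcal{V}_\beta]_{\mathcal{T}}$, writing $\delta_\alpha = \mathcal{Z}_\alpha - N^\gamma_\alpha \mathcal{V}_\gamma$ and applying the Leibniz rule with the observation that $\rho_{\mathcal{T}}(\mathcal{V}_\beta)$ acts as $\partial/\partial u^\beta$ immediately yields $[\delta_\alpha,\mathcal{V}_\beta]_{\mathcal{T}} = \dfrac{\partial N^\gamma_\alpha}{\partial u^\beta}\mathcal{V}_\gamma$ (which matches the stated formula up to a relabeled summation index).

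The main calculation is the first bracket. Writing out
\[
[\delta_\alpha,\delta_\beta]_{\mathcal{T}}
= [\mathcal{Z}_\alpha,\mathcal{Z}_\beta]_{\mathcal{T}}
- [\mathcal{Z}_\alpha, N^\varepsilon_\beta \mathcal{V}_\varepsilon]_{\mathcal{T}}
- [N^\gamma_\alpha \mathcal{V}_\gamma, \mathcal{Z}_\beta]_{\mathcal{T}}
+ [N^\gamma_\alpha \mathcal{V}_\gamma, N^\varepsilon_\beta \mathcal{V}_\varepsilon]_{\mathcal{T}},
\]
the first term is $C^{\:\gamma}_{\alpha\beta}\mathcal{Z}_\gamma$, the two mixed terms produce, via Leibniz and the $\mathcal{Z}$-$\mathcal{V}$ bracket vanishing, the contributions $-\rho^k_\alpha\dfrac{\partial N^\gamma_\beta}{\partial z^k}\mathcal{V}_\gamma$ and $+\rho^k_\beta\dfrac{\partial N^\gamma_\alpha}{\partial z^k}\mathcal{V}_\gamma$, and the last term produces $\Bigl(N^\varepsilon_\alpha\dfrac{\partial N^\gamma_\beta}{\partial u^\varepsilon} - N^\varepsilon_\beta\dfrac{\partial N^\gamma_\alpha}{\partial u^\varepsilon}\Bigr)\mathcal{V}_\gamma$. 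The only step that needs care is re-expressing the result in the adapted frame: since $\mathcal{Z}_\gamma = \delta_\gamma + N^\varepsilon_\gamma \mathcal{V}_\varepsilon$, the horizontal term becomes $C^{\:\gamma}_{\alpha\beta}\delta_\gamma + C^{\:\varepsilon}_{\alpha\beta}N^\gamma_\varepsilon \mathcal{V}_\gamma$, and this is precisely the extra summand needed to complete $\mathcal{R}^{\:\gamma}_{\alpha\beta}$ as stated.

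The computation is routine; the only real bookkeeping hazard is keeping the dummy indices straight when collecting the four $\mathcal{V}_\gamma$-contributions, and noticing that it is the re-expansion of $\mathcal{Z}_\gamma$ in the adapted frame that supplies the $C^{\:\varepsilon}_{\alpha\beta}N^\gamma_\varepsilon$ term of $\mathcal{R}^{\:\gamma}_{\alpha\beta}$.
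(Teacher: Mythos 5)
Your computation is correct and is exactly the direct verification the paper leaves to the reader: the proposition is stated without proof, and the intended argument is precisely the one you give, expanding $\delta_\alpha=\mathcal{Z}_\alpha-N^\gamma_\alpha\mathcal{V}_\gamma$ via the Leibniz rule, the preceding lemma's basis brackets, and the local action of $\rho_{\mathcal{T}}$, with the $C^{\:\varepsilon}_{\alpha\beta}N^\gamma_\varepsilon$ term of $\mathcal{R}^{\:\gamma}_{\alpha\beta}$ arising from rewriting $\mathcal{Z}_\gamma=\delta_\gamma+N^\varepsilon_\gamma\mathcal{V}_\varepsilon$. The only discrepancy is a typo in the paper's second formula, where $\mathcal{V}_\beta$ should read $\mathcal{V}_\gamma$; your $\dfrac{\partial N^\gamma_\alpha}{\partial u^\beta}\mathcal{V}_\gamma$ is the correct expression.
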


Now, consider the complex nonlinear connection on $T^{\prime }E$ introduced
in Section \ref{Ss3.2}. Its coefficients, $N^\alpha_k(z,u)$, change by the
rules \eqref{sch.coef.} and the adapted frame of fields is $\bigg\{\dfrac{%
\delta}{\delta z^k},\dfrac{\partial}{\partial u^\alpha}\bigg\}$, where 
\begin{equation*}
\dfrac{\delta}{\delta z^k} = \dfrac{\partial}{\partial z^k} - N^\alpha_k%
\dfrac{\partial}{\partial u^\alpha}.
\end{equation*}It is interesting to study the relation between the two nonlinear connections on $%
T^{\prime }E$ and $\mathcal{T}^{\prime }E$, respectively. The first relation
in \eqref{7.2} suggests considering another adapted frame on $T^{\prime }E$,
defined by 
\begin{equation}  \label{7.3}
\delta_\alpha = \rho^k_\alpha\dfrac{\partial}{\partial z^k} - N^\beta_\alpha%
\dfrac{\partial}{\partial u^\beta},
\end{equation}
and imposing that it changes by the rules 
\begin{equation}  \label{7.4}
\delta_\alpha = M^\beta_\alpha\widetilde{\delta}_\beta
\end{equation}
or, using \eqref{sch.coef.} and \eqref{3.1.5}, 
\begin{equation}  \label{7.5}
M^\beta_\alpha\widetilde{N}^\gamma_\beta = M^\gamma_\beta N^\beta_\alpha -
\rho^k_\alpha\dfrac{\partial M^\gamma_\beta}{\partial z^k}u^\beta.
\end{equation}
Note that these rules of change can be obtained from \eqref{sch.coef.} by
contrancting with $\rho^k_\alpha$ and by denoting 
\begin{equation}  \label{7.6}
N^\beta_\alpha = \rho^k_\alpha N^\beta_k.
\end{equation}
It follows that

\begin{proposition}
A complex nonlinear connection on $T^{\prime }E$ with the coefficients $%
N^\beta_k$ induces a complex nonlinear connection on $\mathcal{T}^{\prime }E$
with the coefficients $N^\beta_\alpha$ given by \eqref{7.6}. Moreover, the
relations between the adapted frames on $T^{\prime }E$ and $\mathcal{T}%
^{\prime }E$ are 
\begin{equation}  \label{7.7}
\rho_{\mathcal{T}}(\delta_\alpha) = \rho^k_\alpha\dfrac{\delta}{\delta z^k}.
\end{equation}
\end{proposition}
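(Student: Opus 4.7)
The plan is to verify the two distinct assertions of the proposition separately: first, that the functions $N^\beta_\alpha := \rho^k_\alpha N^\beta_k$ genuinely transform as coefficients of a complex nonlinear connection on $\mathcal{T}'E$ (i.e.\ satisfy \eqref{7.5}), and second, that with this definition the anchor $\rho_{\mathcal{T}}$ intertwines the two adapted frames as in \eqref{7.7}.

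For the transformation law, I would start from the already established change rule \eqref{sch.coef.} for the coefficients $N^\beta_k$ on $T'E$, namely $\dfrac{\partial\widetilde{z}^k}{\partial z^h}\widetilde{N}^\gamma_k = M^\gamma_\beta N^\beta_h - \dfrac{\partial M^\gamma_\beta}{\partial z^h}u^\beta$, and contract it with $\rho^h_\alpha$. On the right-hand side this immediately produces $M^\gamma_\beta(\rho^h_\alpha N^\beta_h) - \rho^h_\alpha \dfrac{\partial M^\gamma_\beta}{\partial z^h}u^\beta = M^\gamma_\beta N^\beta_\alpha - \rho^h_\alpha \dfrac{\partial M^\gamma_\beta}{\partial z^h}u^\beta$, which matches the right-hand side of \eqref{7.5}. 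For the left-hand side, the trick is to recognize, using the anchor change rule \eqref{sch.ro} together with $M^\beta_\alpha W^\delta_\beta = \delta^\delta_\alpha$, that $M^\beta_\alpha \widetilde{\rho}^k_\beta = \rho^h_\alpha \dfrac{\partial\widetilde{z}^k}{\partial z^h}$, so that $\rho^h_\alpha \dfrac{\partial\widetilde{z}^k}{\partial z^h}\widetilde{N}^\gamma_k = M^\beta_\alpha \widetilde{\rho}^k_\beta \widetilde{N}^\gamma_k = M^\beta_\alpha \widetilde{N}^\gamma_\beta$, precisely the left-hand side of \eqref{7.5}. This shows that $N^\beta_\alpha$ defines a nonlinear connection on $\mathcal{T}'E$ in the sense of \eqref{7.1}.

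For the second statement, I would simply apply the anchor $\rho_{\mathcal{T}}$ to the definition $\delta_\alpha = \mathcal{Z}_\alpha - N^\beta_\alpha \mathcal{V}_\beta$ and use the local description of $\rho_{\mathcal{T}}$ recalled just before the proposition: $\rho_{\mathcal{T}}(\mathcal{Z}_\alpha) = \rho^k_\alpha \partial/\partial z^k$ and $\rho_{\mathcal{T}}(\mathcal{V}_\beta) = \partial/\partial u^\beta$. Substituting $N^\beta_\alpha = \rho^k_\alpha N^\beta_k$ and factoring out $\rho^k_\alpha$ yields $\rho^k_\alpha\bigl(\partial/\partial z^k - N^\beta_k \partial/\partial u^\beta\bigr) = \rho^k_\alpha\, \delta/\delta z^k$, which is exactly \eqref{7.7}.

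There is no real obstacle here: the proposition is essentially a definitional unpacking, and both parts reduce to one-line index manipulations. The only place where one must be careful is keeping straight which indices label the algebroid frame (Greek) and which label the base coordinates (Latin), and in particular recognizing that the combination $M^\beta_\alpha W^\delta_\beta \rho^h_\delta \dfrac{\partial\widetilde{z}^k}{\partial z^h}$ that arises from the change rule \eqref{sch.ro} telescopes to $\rho^h_\alpha \dfrac{\partial\widetilde{z}^k}{\partial z^h}$, which is the bridge between the two transformation laws.
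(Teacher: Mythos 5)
Your proof is correct. For the second assertion (formula \eqref{7.7}) your computation is identical to the paper's: apply $\rho_{\mathcal{T}}$ to $\delta_\alpha=\mathcal{Z}_\alpha-N^\beta_\alpha\mathcal{V}_\beta$, substitute $N^\beta_\alpha=\rho^k_\alpha N^\beta_k$, and factor out $\rho^k_\alpha$. For the first assertion, however, you take a different route from the paper's displayed proof. You verify directly that the functions $N^\beta_\alpha=\rho^k_\alpha N^\beta_k$ obey the transformation law \eqref{7.5}, by contracting \eqref{sch.coef.} with $\rho^h_\alpha$ and using \eqref{sch.ro} together with $M^\beta_\alpha W^\gamma_\beta=\delta^\gamma_\alpha$ to telescope the left-hand side into $M^\beta_\alpha\widetilde{N}^\gamma_\beta$; this is exactly the contraction argument the authors sketch in the paragraph immediately preceding the proposition, and your index bookkeeping checks out. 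The paper's formal proof instead works dually: it computes $\rho^*_{\mathcal{T}}(\delta u^\alpha)=\mathcal{V}^\alpha+N^\alpha_k\rho^k_\beta\mathcal{Z}^\beta$, invokes the fact that $\rho^*_{\mathcal{T}}$ restricts to an isomorphism on the vertical covectors so that $\rho^*_{\mathcal{T}}(\delta u^\alpha)$ must equal the dual adapted coframe $\delta\mathcal{V}^\alpha=\mathcal{V}^\alpha+N^\alpha_\beta\mathcal{Z}^\beta$, and reads off $N^\alpha_\beta=N^\alpha_k\rho^k_\beta$. Your version buys an explicit, chart-by-chart confirmation that the induced coefficients transform correctly (which is, strictly speaking, what ``defines a nonlinear connection'' means here); the paper's version buys a coordinate-free derivation of the formula \eqref{7.6} itself, at the cost of leaving the transformation-law check implicit in the isomorphism claim. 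Both are valid and complementary.
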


\begin{proof}
We first have to prove that $N^\beta_\alpha$ from \eqref{7.6} are the coefficients of a complex nonlinear connection on $\mathcal{T}'E$. Let $\rho^*_{\mathcal{T}}$ be the dual map of $\rho_{\mathcal{T}}$. Then $\rho^*_{\mathcal{T}}(dz^k) = \rho^k_\alpha\mathcal{Z}^\alpha$ and $\rho^*_{\mathcal{T}}(du^\alpha) = \mathcal{V}^\alpha$, such that $\rho^*_{\mathcal{T}}(\delta u^\alpha) = \rho^*_{\mathcal{T}}(du^\alpha + N^\alpha_k dz^k) = \mathcal{V}^\alpha + N^\alpha_k\rho^k_\beta\mathcal{Z}^\beta$. On the other hand, on $(\mathcal{T}'E)^*$, the dual adapted frame is $\delta\mathcal{V}^\alpha = \mathcal{V}^\alpha + N^\alpha_\beta\mathcal{Z}^\beta$ and $\rho^*_{\mathcal{T}}|_{(V\mathcal{T}'E)^*}:(V\mathcal{T}'E)^*\rightarrow (V\mathcal{T}'E)^*$ is an isomorphism, that is, $\rho^*_{\mathcal{T}}(\delta u^\alpha) = \delta\mathcal{V}^\alpha$, which yields $N^\alpha_k\rho^k_\beta = N^\alpha_\beta$. Also,
\begin{align*}
\rho_{\mathcal{T}}(\delta_\alpha) &= \rho_{\mathcal{T}}(\mathcal{Z}_\alpha - N^\beta_\alpha\mathcal{V}_\beta) = \rho^k_\alpha\dfrac{\partial}{\partial z^k} - N^\beta_\alpha\dfrac{\partial}{\partial u^\beta}\\
&= \rho^k_\alpha\bigg(\dfrac{\partial}{\partial z^k} - N^\beta_k\dfrac{\partial}{\partial u^\beta}\bigg) = \rho^k_\alpha\dfrac{\delta}{\delta z^k}.
\end{align*}
\end{proof}

This result shows that the adapted frame \eqref{7.3} can very well be
interpreted as defining a new complex nonlinear connection on $E$. An
interesting property of this connection is that it can be derived from a
spray.

\begin{theorem}
If $G^\alpha$ are the coefficients of a complex spray on $T^{\prime }E$, as
defined in \eqref{3.7}, then the functions
\begin{equation}  \label{7.8}
N^\beta_\alpha = \dfrac{\partial G^\alpha}{\partial u^\alpha} +
P^\beta_\alpha
\end{equation}
define a complex nonlinear connection on $E$, where
\begin{equation}  \label{10}
P^\beta_\alpha = \dfrac{1}{4}W^\beta_\gamma\bigg(\rho^k_\alpha\dfrac{%
\partial M^\gamma_\delta}{\partial z^k}u^\delta - \dfrac{\partial
M^\gamma_\alpha}{\partial z^k}\rho^k_\delta u^\delta\bigg).
\end{equation}
\end{theorem}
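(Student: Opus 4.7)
The plan is to verify directly that the functions $N^\beta_\alpha$ defined by \eqref{7.8} transform according to the rule \eqref{7.5}, which by the preceding proposition is equivalent to being the coefficients of a complex nonlinear connection on the prolongation $\mathcal{T}'E$, and hence, via the adapted frame \eqref{7.3}, on $E$.

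The first step is to start from the transformation rule \eqref{3.7} of the spray coefficients $G^\alpha$ and differentiate both sides with respect to $\widetilde{u}^\beta$. Since $\widetilde{u}^\beta = M^\beta_\gamma u^\gamma$ and the matrix $M^\beta_\gamma$ depends only on $z$, the chain rule gives $\widetilde{\partial}_\beta = W^\delta_\beta\partial_\delta$ on functions of $u$. Differentiating term by term produces a ``tensorial'' piece $W^\delta_\beta M^\gamma_\sigma\partial_\delta G^\sigma$ together with two anomalous contributions from applying Leibniz to the factors $u^\sigma$ and $\rho^k_\tau u^\tau$ in the correction term of \eqref{3.7}. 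Multiplying through by $M^\beta_\alpha$ and using $M^\beta_\alpha W^\delta_\beta = \delta^\delta_\alpha$ collapses this into a clean expression for $M^\beta_\alpha\widetilde{\partial}_\beta\widetilde{G}^\gamma$ as $M^\gamma_\sigma\partial_\alpha G^\sigma$ plus two anomalous correction terms of the form $-\tfrac{1}{2}\frac{\partial M^\gamma_\alpha}{\partial z^k}\rho^k_\tau u^\tau$ and $-\tfrac{1}{2}\frac{\partial M^\gamma_\sigma}{\partial z^k}u^\sigma\rho^k_\alpha$.

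The second step is to compute $M^\beta_\alpha\widetilde{P}^\gamma_\beta$ directly from \eqref{10}. Since $P^\beta_\alpha$ is algebraically built from $W$, $\rho$, $\partial M/\partial z$ and $u$, whose transformation laws are all fixed by \eqref{7.1.1}, \eqref{7.1.2} and \eqref{sch.ro}, this is pure bookkeeping. The outcome should match precisely the combination of anomalous terms needed so that, when added to the result of the first step, every non-tensorial contribution collapses into the single inhomogeneous term $-\rho^k_\alpha\frac{\partial M^\gamma_\beta}{\partial z^k}u^\beta$ demanded on the right-hand side of \eqref{7.5}. Summing the two steps then gives $M^\beta_\alpha\widetilde{N}^\gamma_\beta = M^\gamma_\sigma N^\sigma_\alpha - \rho^k_\alpha\frac{\partial M^\gamma_\beta}{\partial z^k}u^\beta$, which is exactly \eqref{7.5}, finishing the proof.

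The main obstacle is the careful index bookkeeping in the differentiation of $\widetilde{G}^\gamma$: the two anomalous terms it produces are distinct, one being symmetric in the roles of $\alpha$ and the summed index $\sigma$, the other not. The specific antisymmetric combination $\rho^k_\alpha\partial_k M^\gamma_\delta u^\delta - \partial_k M^\gamma_\alpha\rho^k_\delta u^\delta$ in \eqref{10}, together with the coefficient $\tfrac{1}{4}$, is precisely what is required to absorb the ``wrong-index'' anomaly and leave behind only the single inhomogeneity dictated by \eqref{7.5}. This is the one place where the exact form of $P^\beta_\alpha$ is used in an essential way; the rest of the argument is systematic.
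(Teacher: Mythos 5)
Your proposal is correct and follows essentially the same route as the paper's own proof: differentiate the transformation law \eqref{3.7} with respect to the fibre coordinates, substitute $\partial G^\beta/\partial u^\gamma = N^\beta_\gamma - P^\beta_\gamma$, and check that the transformation of $P^\beta_\alpha$ supplies exactly the anomalous halves needed to reduce the relation to \eqref{7.5}. The one computation you leave as "bookkeeping" (the evaluation of $M^\delta_\gamma\widetilde{P}^\alpha_\delta$, which uses \eqref{sch.ro} and the identity $M^\alpha_\beta\,\partial W^\beta_\varepsilon/\partial\widetilde{z}^h = -\big(\partial M^\alpha_\beta/\partial\widetilde{z}^h\big)W^\beta_\varepsilon$) is precisely the step the paper carries out in \eqref{aux1}--\eqref{aux2}, and it does close as you predict.
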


\begin{proof}
Deriving in \eqref{3.7} with respect to $u^\gamma$ and taking into account the \eqref{sch.ro} rules of change yields
\begin{equation*}
M^\delta_\gamma\dfrac{\partial\widetilde{G}^\alpha}{\partial\widetilde{u}^\delta} = M^\alpha_\beta\dfrac{\partial G^\beta}{\partial u^\gamma} - \dfrac{1}{2}\dfrac{\partial M^\alpha_\gamma}{\partial z^k}\rho^k_\delta u^\delta - \dfrac{1}{2}\rho^k_\gamma\dfrac{\partial M^\alpha_\delta}{\partial z^k}u^\delta.
\end{equation*}
Replacing $\dfrac{\partial G^\beta}{\partial u^\gamma} = N^\beta_\gamma - P^\beta_\gamma$ gives
\begin{equation*}
M^\delta_\gamma\widetilde{N}^\alpha_\delta - M^\delta_\gamma\widetilde{P}^\alpha_\delta = M^\alpha_\beta N^\beta_\gamma - M^\alpha_\beta P^\beta_\gamma - \rho^k_\delta\dfrac{\partial M^\alpha_\delta}{\partial z^k}u^\delta + \dfrac{1}{2}\rho^k_\delta\dfrac{\partial M^\alpha_\delta}{\partial z^k}u^\delta - \dfrac{1}{2}\dfrac{\partial M^\alpha_\gamma}{\partial z^k}\rho^k_\delta u^\delta.
\end{equation*}
Comparing this with \eqref{7.5} means that we have to show that
\begin{equation}
\label{11}
M^\alpha_\beta P^\beta_\gamma - M^\delta_\gamma\widetilde{P}^\alpha_\delta = \dfrac{1}{2}\rho^k_\delta\dfrac{\partial M^\alpha_\delta}{\partial z^k}u^\delta - \dfrac{1}{2}\dfrac{\partial M^\alpha_\gamma}{\partial z^k}\rho^k_\delta u^\delta.
\end{equation}
First, from \eqref{10}, it follows that
\begin{equation}
\label{aux1}
M^\alpha_\beta P^\beta_\gamma = \dfrac{1}{4}\bigg( \rho^k_\delta\dfrac{\partial M^\alpha_\delta}{\partial z^k}u^\delta - \dfrac{\partial M^\alpha_\gamma}{\partial z^k}\rho^k_\delta u^\delta \bigg).
\end{equation}
Then, again using \eqref{10},
\begin{equation*}
\widetilde{P}^\alpha_\delta = \dfrac{1}{4}M^\alpha_\beta\bigg( \widetilde{\rho}^h_\delta\dfrac{\partial W^\beta_\varepsilon}{\partial\widetilde{z}^h}\widetilde{u}^\varepsilon - \dfrac{\partial W^\beta_\delta}{\partial\widetilde{z}^h}\widetilde{\rho}^h_\varepsilon\widetilde{u}^\varepsilon \bigg),
\end{equation*}
which, multiplied by $-M^\delta_\gamma$ and using \eqref{sch.ro} and $M^\alpha_\beta\dfrac{\partial W^\beta_\varepsilon}{\partial\widetilde{z}^h} = -\dfrac{\partial M^\alpha_\beta}{\partial\widetilde{z}^h}W^\beta_\varepsilon$, gives after some basic computations
\begin{equation}
\label{aux2}
-M^\delta_\gamma\widetilde{P}^\alpha_\delta = \dfrac{1}{4}\bigg( \rho^k_\delta\dfrac{\partial M^\alpha_\delta}{\partial z^k}u^\delta - \dfrac{\partial M^\alpha_\gamma}{\partial z^k}\rho^k_\delta u^\delta \bigg).
\end{equation}
Adding \eqref{aux1} and \eqref{aux2} yields \eqref{11}.
\end{proof}

\section{Induced Lagrange structures on holomorphic Lie algebroids}

\label{S7}

It is interesting to study the conditions under which two nonlinear
connections on $E$ and $T^{\prime }M$, respectively, are linked.

The geometry of the bundle $T^{\prime }M$ endowed with a complex Lagrangian $%
L(z,\eta )$, where 
\begin{equation}
g_{i\bar{j}}(z,\eta )=\frac{\partial ^{2}L}{\partial \eta ^{i}\partial \bar{%
\eta}^{j}}  \label{G}
\end{equation}
defines a nondegenerate metric, is well-known. The pair $(M,L)$ is called 
\emph{complex Lagrange } space (\cite{Mu}). In order to distinguish it from $%
E$, we will further denote it $(T^{\prime }M,L)$.

A remarcable complex nonlinear connection on $T^{\prime }M$ is 
\begin{equation}
N_{k}^{i}=g^{\bar{j}i}\frac{\partial ^{2}L}{\partial z^{k}\partial \bar{\eta}%
^{j}},  \label{C-L}
\end{equation}
called the Chern-Lagrange nonlinear connection. If $L$ if homogeneous in $%
\eta$, i.e. $L(z,\lambda \eta )=\lambda \bar{\lambda}L(z,\eta )$ for all $%
\lambda \in \mathbb{C},$ then $(M,L)$ is called complex Finsler space and in
particular (\ref{C-L}) defines a complex nonlinear connection on $T^{\prime
}M$, called Chern-Finsler nonlinear connection.

The problem of introducing a Lagrange (in particular, Finsler), on $E$ by a
Lagrange (Finsler) complex space $(T^{\prime }M,L)$ is relatively simple if
we impose some additional conditions on the fibers of $E$ and on the anchor
map $\rho $. We will now analyze the three possible cases, depending on the
relation between the dimensions of $M$, $E$ and the rank of $\rho$.

\medskip \textbf{I)} The case when $m=n=\opn{rank}\rho.$ \medskip

Recall that on the manifold $E$ we have local coordinates in a map $%
(z^{k},u^{\alpha })$, while on $T^{\prime }M$ we have $(z^{k},\eta ^{k}),$
where $\eta ^{k}=u^{\alpha }\rho _{\alpha }^{k}(z)$, as stated in %
\eqref{II.1}, and $\alpha,\beta,...,i,j,...$ all range over $\overline{1,n}$.

Since $n=\opn{rank}\rho$, it follows that $\rho $ is a diffeomorphism with $%
\rho^{-1}=[\rho _{k}^{\alpha }]$, such that $\rho _{\alpha }^{k}\rho
_{k}^{\beta}=\delta _{\alpha }^{\beta }$ and $u^{a}=\rho _{k}^{\alpha }\eta
^{k}$.

If $(T^{\prime }M,L)$ is a complex Lagrange (Finsler) complex space and $%
L(z,\eta )$ is the Lagrange function, then, according to \eqref{II.1}, it
induces on $\rho (E)$ another Lagrange function, $L^{\ast }(z,u)=L(z,\eta
(u))$, where the metric tensor is
\begin{equation*}
g_{\alpha \bar{\beta}}(z,u):=\frac{\partial ^{2}L^{\ast }}{\partial
u^{\alpha }\partial \bar{u}^{\beta }}=\rho _{\alpha }^{i}\rho _{\bar{\beta}%
}^{\bar{j}}g_{i\bar{j}}.
\end{equation*}
Since $\rho $ is a diffeomorphism, $\opn{rank}g_{\alpha \bar{\beta}}=n=m$
and $L^{\ast }$ can be interpreted as a function on $E$.

The pair $(E,L^{\ast })$ is called \emph{Lagrange structure} on the Lie
algebroid $E$. Notice that if $L$ is homogeneous, then $L^{\ast }(z,\lambda
u)=\lambda\bar{\lambda}L^{\ast }(z,u)$, in which case a \emph{Finsler
structure} is induced on $E$.

Let $N_{k}^{\alpha }(z,\eta )$ be the coefficients of a nonlinear connection
on $E$ and $\dfrac{\delta }{\delta z^{k}}=\dfrac{\partial }{\partial z^{k}}%
-N_{k}^{\alpha }\dfrac{\partial }{\partial u^{\alpha }}$ the corresponding
adapted frame of fields. It is mapped by $\rho _{\ast }$, using %
\eqref{act.apl.tg.}, to $\dfrac{\delta ^{\ast }}{\delta z^{k}}=\dfrac{%
\partial }{\partial z^{k}}-\overset{\ast }{N_{k}^{h}}\dfrac{\partial }{%
\partial \eta ^{h}}$ and we obtain 
\begin{equation}
\overset{\ast }{N_{k}^{h}}(z,\eta )=\rho _{\alpha }^{h}N_{k}^{\alpha }(z,u)-%
\dfrac{\partial \rho _{\alpha }^{h}}{\partial z^{k}}u^{\alpha }
\label{c.i.1}
\end{equation}
where $\eta ^{k}=u^{\alpha }\rho _{\alpha }^{k}(z)$. From $\dfrac{\delta }{%
\delta z^{h}}=\dfrac{\partial \widetilde{z}^{k}}{\partial z^{h}}\dfrac{%
\delta }{\delta \widetilde{z}^{k}}$, it follows that for a change of local
maps on $E$ we have $\dfrac{\delta ^{\ast }}{\delta z^{h}}=\dfrac{\partial 
\widetilde{z}^{k}}{\partial z^{h}}\dfrac{\delta ^{\ast }}{\delta \widetilde{z%
}^{k}}$, hence $\overset{\ast }{N_{k}^{h}}$ defines a nonlinear connection,
induced on $T^{\prime }M$ by $N_{k}^{\alpha }$.

Conversely, let $N_{k}^{h}$ be a complex nonlinear connection on $T^{\prime
}M$. We search for a nonlinear connection $\overset{\ast }{N_{k}^{\alpha }}$
induced on $E$ by $N_{k}^{h}$. Obviously, we will need that $%
N_{k}^{h}(z,\eta )=\rho _{\alpha }^{h}\overset{\ast }{N_{k}^{\alpha }}(z,u)-%
\dfrac{\partial \rho _{\alpha }^{h}}{ \partial z^{k}}u^{\alpha }$, which,
contracted with $\rho _{h}^{\beta }$ gives $\overset{\ast }{N_{k}^{\beta }}%
(z,u)=\rho _{h}^{\beta }N_{k}^{h}(z,\eta)+\rho _{h}^{\beta }\dfrac{\partial
\rho _{\alpha }^{h}}{\partial z^{k}}u^{\alpha }$. By deriving the condition $%
\rho _{\alpha }^{k}\rho _{k}^{\beta }=\delta _{\alpha }^{\beta }$ and
substituting in the above we get the following nonlinear connection induced
on $E$: 
\begin{equation}
\overset{\ast }{N_{k}^{\alpha }}(z,u)=\rho _{h}^{\alpha }N_{k}^{h}(z,\eta )-%
\dfrac{\partial \rho _{h}^{\alpha }}{\partial z^{k}}\eta ^{k}.  \label{c.i.2}
\end{equation}

To conclude, in the case when $m=n=\opn{rank}\rho$, the diffeomorphism $%
\rho _{\ast}$ maps the decomposition $T^{\prime }E=VE\oplus HE$ in $%
T^{\prime }T^{\prime}M=VT^{\prime }M\oplus HT^{\prime }M$ preserving the
distributions, and $\rho_{\ast }^{-1}$ has the converse role.

\begin{proposition}
If $(T^{\prime }M,L)$ is a complex Lagrange (Finsler) space and \eqref{C-L}
is an associated complex nonlinear connection, then 
\begin{equation}
\overset{\ast }{N_{k}^{\alpha }}=g^{\bar{\beta}\alpha }\frac{\partial
^{2}L^{\ast }}{\partial z^{k}\partial \bar{u}^{\beta }}  \label{c.i.C-L}
\end{equation}
is the nonlinear connection induced on $E$, called the Chern-Lagrange
connection of the Lie algebroid.
\end{proposition}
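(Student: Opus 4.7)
The plan is to compute the right-hand side of \eqref{c.i.C-L} directly from $L^{\ast}(z,u)=L(z,\eta(z,u))$ with $\eta^k=u^{\alpha}\rho_{\alpha}^k(z)$, and then verify that the result coincides with the general induction formula \eqref{c.i.2} applied to the Chern-Lagrange coefficients \eqref{C-L}. No new geometric input is needed: the argument reduces to one chain-rule expansion, one inverse-metric identity, and the differentiated form of $\rho_{\alpha}^k\rho_k^{\beta}=\delta_{\alpha}^{\beta}$.

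First I would compute the mixed derivative of $L^{\ast}$. Since $\rho$ is holomorphic, $\bar{\eta}^j=\bar{u}^{\beta}\rho_{\bar{\beta}}^{\bar{j}}$ is antiholomorphic in $(z,u)$, so $\partial L^{\ast}/\partial\bar{u}^{\beta}=\rho_{\bar{\beta}}^{\bar{j}}\,\partial L/\partial\bar{\eta}^j$. Differentiating again in $z^k$ and using $\partial\eta^i/\partial z^k=u^{\alpha}\,\partial\rho_{\alpha}^i/\partial z^k$ produces two terms: one containing $\partial^{2}L/\partial z^k\partial\bar{\eta}^j$ and one containing $g_{i\bar{j}}\,u^{\alpha}\,\partial\rho_{\alpha}^i/\partial z^k$, via \eqref{G}. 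Then I would record the inverse-metric identity $g^{\bar{\beta}\alpha}(z,u)=\rho_i^{\alpha}\rho_{\bar{j}}^{\bar{\beta}}g^{\bar{j}i}(z,\eta)$, which follows immediately from the already-stated relation $g_{\alpha\bar{\beta}}=\rho_{\alpha}^i\rho_{\bar{\beta}}^{\bar{j}}g_{i\bar{j}}$ together with the invertibility of $\rho$ (guaranteed here by $\opn{rank}\rho=n=m$).

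Multiplying the mixed-derivative formula by $g^{\bar{\beta}\alpha}$ and telescoping pairs like $\rho_{\bar{\beta}}^{\bar{j}}\rho_{\bar{l}}^{\bar{\beta}}=\delta_{\bar{l}}^{\bar{j}}$ and $g^{\bar{j}i}g_{m\bar{j}}=\delta_m^i$, I expect to obtain $g^{\bar{\beta}\alpha}\,\partial^{2}L^{\ast}/\partial z^k\partial\bar{u}^{\beta}=\rho_i^{\alpha}N_k^i+\rho_m^{\alpha}(\partial\rho_{\gamma}^m/\partial z^k)u^{\gamma}$, with $N_k^i$ precisely the Chern-Lagrange coefficient \eqref{C-L} on $T^{\prime}M$. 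To identify this with \eqref{c.i.2}, I would differentiate $\rho_h^{\alpha}\rho_{\gamma}^h=\delta_{\gamma}^{\alpha}$ in $z^k$, obtaining $\rho_h^{\alpha}\,\partial\rho_{\gamma}^h/\partial z^k=-(\partial\rho_h^{\alpha}/\partial z^k)\rho_{\gamma}^h$; the second term then becomes $-(\partial\rho_h^{\alpha}/\partial z^k)\eta^h$, so the full expression collapses to $\rho_h^{\alpha}N_k^h-(\partial\rho_h^{\alpha}/\partial z^k)\eta^h=\overset{\ast}{N_k^{\alpha}}$, as claimed. The only genuine obstacle is keeping the barred/unbarred indices straight through the chain rule and the metric inversion; once the inverse-metric identity is set up, every cancellation is forced.
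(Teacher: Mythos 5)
Your argument is correct and is exactly the computation the paper relies on: the proposition is stated without a written-out proof in Case I, but the identical chain-rule expansion of $g^{\bar\beta\alpha}\partial^{2}L^{\ast}/\partial z^{k}\partial\bar u^{\beta}$ through $\rho^{\ast}_{}$, the inverse-metric identity $g^{\bar\beta\alpha}=\rho_i^{\alpha}\rho_{\bar j}^{\bar\beta}g^{\bar j i}$ (recorded as \eqref{inv.2}), and the differentiated relation $\rho_h^{\alpha}\rho_{\gamma}^{h}=\delta_{\gamma}^{\alpha}$ are carried out explicitly by the authors in Case II, of which your calculation is the invertible special case. Your final expression $\rho_h^{\alpha}N_k^{h}-(\partial\rho_h^{\alpha}/\partial z^{k})\eta^{h}$ also confirms that the contracted index on $\eta$ in \eqref{c.i.2} should read $\eta^{h}$ rather than $\eta^{k}$.
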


\medskip \textbf{II)} The case when $\opn{rank}\rho=m<n$. \medskip

Note that, in this case, the morphism $\rho $ maps $E$ in $\rho(E)$ which is
an immersed submanifold of $T^{\prime }M$.

As in the first case, we will introduce on $E$ Lagrange structures induced
by a Lagrange structure $(T^{\prime }M,L)$.

Let $g_{i\bar{j}}(z,\eta )=\dfrac{\partial^{2}L}{\partial \eta ^{i}\partial 
\bar{\eta}^{j}}$ the metric tensor defined by the regular Lagrangian $%
L:T^{\prime }M\rightarrow R$. As in the first case, we consider the
Lagrangian induced on $\rho (E)$ given by $L^{\ast }(z,u)=L(z,\eta (u))$,
with the metric tensor 
\begin{equation*}
g_{\alpha \bar{\beta}}(z,u):=\frac{\partial ^{2}L^{\ast }}{\partial
u^{\alpha }\partial \bar{u}^{\beta }}=\rho _{\alpha }^{i}\rho _{\bar{\beta}%
}^{\bar{j}}g_{i\bar{j}}.
\end{equation*}

Since $\opn{rank}\rho=m$ and $\opn{rank}[g_{i\bar{j}}]=n>m$, it follows
that $\opn{rank}[g_{\alpha \bar{\beta}}]=m$.

Let $X_{\alpha }=\rho _{\alpha }^{k}\dfrac{\partial }{\partial z^{k}}\in
\chi(M)$, $\alpha=\overline{1,m}$, be fields on the basis manifold $M$. From 
$\opn{rank}\rho=m $ we read that $\{X_{\alpha }\}$ are linear independent
and can be lifted to $\rho(E)$ by $\big\{X_{\alpha }^{\ast }:=\dfrac{%
\partial }{\partial u^{\alpha }}=\rho _{\alpha }^{k}\dfrac{\partial }{%
\partial \eta ^{k}}\big\}_{\alpha=\overline{1,m}}$, which defines an $m$%
-dimensional subdistribution $V\rho(E)$ of the $n$-dimensional distribution $%
VT^{\prime }M$.

Let us fix a complex nonlinear connection $N_{k}^{h}(z,\eta )$ on $T^{\prime
}M$, in particular the Chern-Lagrange connection, and let us consider its
adapted basis and cobasis. We search for a nonlinear connection $\overset{%
\ast }{N_{k}^{\alpha }}(z,\eta (u))$ induced by $N_{k}^{h}(z,\eta )$ as the
image through $\rho $ of a nonlinear connection $N_{k}^{\alpha }(z,u)$ on $E$.

Denote by $G=g_{i\bar{j}}\delta \eta^{i}\otimes \delta \bar{\eta}^{j}$ the
metric structure on $VT^{\prime }M$. We complete $\{X_{\alpha
}^{\ast}\}_{\alpha =\overline{1,m}}$ with $\{Y_{a}\}_{a=\overline{1,n-m}}$,
vector fields normal to $V\rho (E)$ with respect to $G$. Moreover, we assume
these vectors to be orthonormal. By writing $Y_{a}=Y_{a}^{k}\dfrac{\partial 
}{\partial \eta ^{k}}$, the orthogonality conditions read 
\begin{equation}
g_{i\bar{j}}Y_{a}^{i}\rho _{\bar{\alpha}}^{\bar{j}}=g_{i\bar{j}}\rho
_{\alpha }^{i}Y_{\bar{a}}^{\bar{j}}=0  \label{ort}
\end{equation}%
and the normality ones give $g_{i\bar{j}}Y_{a}^{i}Y_{\bar{b}}^{\bar{j}%
}=\delta _{ab}$.

We have thus obtained $\mathcal{R}^{\ast }=\{X_{\alpha }^{\ast },Y_{a}\}$, a
frame on $VT^{\prime }M$ with the matrix $R=[\rho _{\alpha }^{i}\;;\
Y_{a}^{i}]$ of change from the natural frame $\bigg\{\dfrac{\partial }{%
\partial \eta ^{k}}\bigg\}$. Let $R^{-1}=[\rho _{i}^{\alpha }\;;\
Y_{i}^{a}]^{t}$ its inverse matrix, such that 
\begin{equation}
\rho _{i}^{\alpha }\rho _{\beta }^{i}=\delta _{\beta }^{\alpha }\ ;\quad
\rho_{i}^{\alpha }Y_{a}^{i}=0\ ;\quad Y_{i}^{a}Y_{b}^{i}=0\ ;\quad \rho
_{\alpha}^{j}\rho _{i}^{\alpha }+Y_{a}^{j}Y_{i}^{a}=\delta _{i}^{j}.
\label{inv}
\end{equation}

The technique used in the following is known from holomorphic subspaces, 
\cite{Mu}. From \eqref{inv} we get 
\begin{equation}
g^{\bar{j}k}Y_{k}^{a}\rho _{\bar{j}}^{\bar{\alpha}}=0\ ;\quad g^{\bar{\beta}%
\alpha }=\rho _{i}^{\alpha }\rho _{\bar{j}}^{\bar{\beta}}g^{\bar{j}i}.
\label{inv.2}
\end{equation}

Now, let $N_{k}^{h}(z,\eta )$ be a complex nonlinear connection on $%
T^{\prime }M$ and $N_{k}^{\alpha }(z,u)$ a connection on $E$, with the
adapted cobases $\{dz^{k},\delta u^{\alpha }=du^{\alpha }+N_{h}^{\alpha
}dz^{h}\}$ and $\{dz^{k},\delta \eta ^{k}=d\eta ^{k}+N_{h}^{k}dz^{h}\}$,
respectively. The identities in \eqref{dual} suggest considering the cobasis 
$\delta \eta ^{k}$ as a frame of forms $\delta ^{\ast }\eta ^{k}=d^{\ast
}\eta ^{k}+\overset{\ast }{N_{h}^{k}}d^{\ast }z^{h}$ induced on $\rho (E)$,
where $\overset{\ast }{N_{h}^{k}}=N_{h}^{k}(z,\eta )$ with $\eta ^{k}=\rho
_{\alpha }^{k}u^{\alpha }$. Among these, only $m$ are linear independent. In
the following, for the simplicity of writing, we will identify $\overset{\ast 
}{N_{h}^{k}}=N_{h}^{k}$.

\begin{definition}
$N_{k}^{\alpha }$ is called an induced connection on $E$ by $N_{k}^{h}$ from 
$T^{\prime }M$ if 
\begin{equation}
\delta u^{\alpha }=\rho _{k}^{\alpha }\delta ^{\ast }\eta ^{k}.
\label{indusa}
\end{equation}
\end{definition}

In matrix form, the right-hand side yields $\opn{rank}[\delta u^{\alpha
}]=m $, thus $\{\delta u^{\alpha }\}$ can define a cobasis on $E$.
Substituting from \eqref{dual}, we obtain 
\begin{equation*}
du^{\alpha }+N_{h}^{\alpha }dz^{h} = \rho _{k}^{\alpha }\bigg\{u^{\beta }%
\dfrac{\partial \rho _{\beta }^{k}}{\partial z^{h}}dz^{h}+\rho
_{\beta}^{k}du^{\beta }+N_{h}^{k}dz^{h}\bigg\},
\end{equation*}
such that $\rho _{k}^{\alpha}\rho _{\beta }^{k}=\delta _{\beta }^{\alpha }$
yields $du^{\alpha}+N_{h}^{\alpha }dz^{h}=du^{\alpha }+\rho _{k}^{\alpha }%
\bigg\{N_{h}^{k}+u^{\beta}\dfrac{\partial \rho _{\beta }^{k}}{\partial z^{h}}%
\bigg\}dz^{h}$. This means that the connections are linked by 
\begin{equation}
N_{h}^{\alpha }(z,u)=\rho _{k}^{\alpha }\bigg\{N_{h}^{k}(z,\eta )+u^{\beta }%
\dfrac{\partial \rho _{\beta }^{k}}{\partial z^{h}}\bigg\},\quad \mbox{where}\
\eta ^{k}=\rho _{\alpha }^{k}u^{\alpha }.  \label{c.n. ind}
\end{equation}

Thus, given a nonlinear connection $N_{h}^{k}(z,\eta )$ on $T^{\prime }M$,
we have obtained a nonlinear connection $N_{h}^{\alpha }(z,u)$ on $E$. The
relations between the two connections are given in the following

\begin{proposition}
If $N_{h}^{\alpha }(z,u)$ is the nonlinear connection induced on $E$ by the
nonlinear connection $N_{h}^{k}(z,\eta )$ from $T^{\prime }M$, then
\begin{description}
\item[i)] $dz^{k} = d^{\ast }z^{k}\ ;\quad \delta^{\ast }\eta^{k} =\rho
_{\alpha }^{k}\delta u^{\alpha}+Y_{a}^{k}Y_{j}^{a}H_{h}^{j}dz^{h}$, where $%
H_{h}^{j}=N_{h}^{j}+u^{\beta }\dfrac{\partial \rho _{\beta }^{j}}{\partial
z^{h}}$.

\item[ii)] $\dfrac{\delta ^{\ast }}{\delta z^{k}} = \dfrac{\delta }{\delta
z^{k}}+Y_{k}^{a}Y_{a}^{h}H_{h}^{j}\frac{\partial }{\partial \eta ^{j}}\
;\quad \dfrac{\partial ^{\ast }}{\partial u^{\alpha }}=\rho _{\alpha }^{k}%
\dfrac{\partial }{\partial \eta ^{k}}$.
\end{description}
\end{proposition}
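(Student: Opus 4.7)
The plan is to verify each of the four identities by direct coordinate computation, using the explicit formulas already available: the dual relations \eqref{dual}, the action of the tangent anchor \eqref{act.apl.tg.}, the induced-connection formula \eqref{c.n. ind}, and the completeness relation from \eqref{inv}, namely $\rho_\alpha^j\rho_i^\alpha+Y_a^jY_i^a=\delta_i^j$. This last identity is the engine of the whole argument: it is what produces the nontrivial ``orthogonal correction'' terms involving the normal frame $\{Y_a\}$.

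For part (i), the equality $dz^k=d^*z^k$ will be immediate from the first line of \eqref{dual}. For the second identity I would begin from $\delta^*\eta^k=d^*\eta^k+N_h^k\,dz^h$, expand $d^*\eta^k$ via the second line of \eqref{dual}, and then substitute the cobasis relation $du^\alpha=\delta u^\alpha-N_h^\alpha\,dz^h$ on $E$. After collecting, the coefficient of $\delta u^\alpha$ should fall out as $\rho_\alpha^k$, while the coefficient of $dz^h$ should reduce to $H_h^k-\rho_\alpha^k N_h^\alpha$. At this point the induced-connection formula $N_h^\alpha=\rho_j^\alpha H_h^j$ turns the second piece into $\rho_\alpha^k\rho_j^\alpha H_h^j$, and a single application of completeness in the form $\rho_\alpha^k\rho_j^\alpha=\delta_j^k-Y_a^kY_j^a$ collapses this to $Y_a^kY_j^a H_h^j$, as claimed.

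For part (ii), the identity $\dfrac{\partial^*}{\partial u^\alpha}=\rho_\alpha^k\dfrac{\partial}{\partial\eta^k}$ is nothing but the second line of \eqref{act.apl.tg.}, so there is nothing to do there. For the frame identity I would push $\dfrac{\delta}{\delta z^k}=\dfrac{\partial}{\partial z^k}-N_k^\alpha\dfrac{\partial}{\partial u^\alpha}$ forward through $\rho_*$ termwise using \eqref{act.apl.tg.}. Adding and subtracting $N_k^h\dfrac{\partial}{\partial\eta^h}$ in the result would isolate the adapted frame $\dfrac{\delta}{\delta z^k}$ of $T'M$, leaving a remainder whose $\dfrac{\partial}{\partial\eta^h}$-coefficient equals $H_k^h-N_k^\alpha\rho_\alpha^h$. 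Exactly the same two-step reduction as in part (i)---substitute $N_k^\alpha=\rho_j^\alpha H_k^j$ and then apply completeness---would simplify this coefficient to the normal-projector expression acting on $H$, matching the statement up to a relabeling of dummy indices.

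The only anticipated obstacle is notational bookkeeping: both parts boil down to the same short computation (substitute the induced connection, then apply completeness), and the main care required is keeping straight which $\rho$- and $Y$-factors carry ``row'' versus ``column'' indices and on which slot of $H$ the projector lands. No deeper geometric input is needed beyond recognizing that the completeness relation in \eqref{inv} is precisely what encodes the decomposition $VT'M=V\rho(E)\oplus V\rho(E)^{\perp}$ underlying the whole Case II construction.
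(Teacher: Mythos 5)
Your proposal is correct and follows essentially the same route as the paper: part (ii) is proved there by exactly your computation (push the adapted frame through $\rho_*$, insert $N_k^\alpha=\rho_j^\alpha H_k^j$ from \eqref{c.n. ind}, and apply the completeness relation $\rho_\alpha^h\rho_k^\alpha=\delta_k^h-Y_k^aY_a^h$). The only difference is that the paper dismisses part (i) as "already proven" from the derivation of \eqref{c.n. ind}, whereas you carry out the short dual computation explicitly, which is if anything a more complete account.
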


\begin{proof}
The first point is already proven, thus we only have to prove \textbf{ii)}:
\begin{align*}
\dfrac{\delta ^{\ast }}{\delta z^{k}} &= \dfrac{\partial ^{\ast }}{\partial z^{k}} - N_{k}^{\alpha }\frac{\partial ^{\ast }}{\partial u^{\alpha }}\\
&= \dfrac{\partial }{\partial z^{k}}+u^{\alpha }\dfrac{\partial \rho _{\alpha }^{h}}{\partial z^{k}}\dfrac{\partial }{\partial \eta ^{h}}-N_{k}^{\alpha }\rho_{\alpha }^{h}\dfrac{\partial }{\partial \eta ^{h}}\\
&= \dfrac{\delta }{\delta z^{k}}+N_{k}^{h}\dfrac{\partial }{\partial \eta ^{h}}+u^{\alpha }\dfrac{\partial \rho _{\alpha }^{h}}{\partial  z^{k}}\dfrac{\partial }{\partial \eta ^{h}}-\rho _{\alpha }^{h}\rho _{k}^{\alpha}\bigg\{N_{h}^{j} + u^{\beta }\dfrac{\partial \rho _{\beta }^{j}}{\partial z^{h}}\bigg\}\dfrac{\partial }{\partial \eta ^{j}}.
\end{align*}
Using $\rho _{\alpha }^{h}\rho _{k}^{\alpha }=\delta_{k}^{h}-Y_{k}^{a}Y_{a}^{h}$ yields
$\dfrac{\delta ^{\ast }}{\delta z^{k}}=\dfrac{\delta }{\delta z^{k}}
+Y_{k}^{a}Y_{a}^{h}H_{h}^{j}\dfrac{\partial }{\partial \eta ^{j}}$.
\end{proof}

Let us consider on $T^{\prime }M$ the Chern-Lagrange connection, $%
N_{k}^{i}=g^{\bar{j}i}\dfrac{\partial ^{2}L}{\partial z^{k}\partial \bar{\eta%
}^{j}}$, while on $\rho (E)$ we consider a nonlinear connection of
Chern-Lagrange type, 
\begin{equation}
N_{k}^{\alpha }=g^{\bar{\beta}\alpha }\frac{\partial ^{2}L^{\ast }}{\partial
^{\ast }z^{k}\partial ^{\ast }\bar{u}^{\beta }}.  \label{C-L ind}
\end{equation}
Then 
\begin{equation*}
N_{k}^{\alpha }=\rho _{i}^{\alpha }\rho _{\bar{j}}^{\bar{\beta}}g^{\bar{j}i}%
\dfrac{\partial ^{\ast }}{\partial z^{k}}\left( \rho _{\bar{\beta}}^{\bar{h}}%
\frac{\partial L}{\partial \bar{\eta}^{h}}\right) =\rho _{i}^{\alpha }\rho _{%
\bar{j}}^{\bar{\beta}}g^{\bar{j}i}\left( \dfrac{\partial }{\partial z^{k}}%
+u^{\gamma }\dfrac{\partial \rho _{\gamma }^{l}}{\partial z^{k}}\dfrac{%
\partial }{\partial \eta ^{l}}\right) \left( \rho _{\bar{\beta}}^{\bar{h}}%
\dfrac{\partial L}{\partial \bar{\eta}^{h}}\right)
\end{equation*}
and since $\rho $ is holomorphic, hence $\dfrac{\partial }{\partial z^{k}}%
\big(\rho _{\bar{\beta}}^{\bar{h}}\big)=0$, we get 
\begin{equation*}
N_{k}^{\alpha }=\rho _{i}^{\alpha }\rho _{\bar{j}}^{\bar{\beta}}g^{ \bar{j}%
i}\left( \rho _{\bar{\beta}}^{\bar{h}}\dfrac{\partial ^{2}L}{\partial
z^{k}\partial \bar{\eta}^{h}}+u^{\gamma }\dfrac{\partial \rho _{\gamma }^{l}%
}{\partial z^{k}}\rho _{\bar{\beta}}^{\bar{h}}g_{l\bar{h}}\right)
=\rho_{i}^{\alpha }\rho _{\bar{j}}^{\bar{\beta}}\rho _{\bar{\beta}}^{\bar{h}%
}g^{\bar{j}i}\left( \dfrac{\partial ^{2}L}{\partial z^{k}\partial \bar{\eta}%
^{h}}+u^{\gamma }\dfrac{\partial \rho _{\gamma }^{l}}{\partial z^{k}}g_{l%
\bar{h}}\right).
\end{equation*}
With $\rho _{\bar{j}}^{\bar{\beta}}\rho _{\bar{\beta}}^{\bar{h}}=\delta_{%
\bar{j}}^{\bar{h}}-Y_{\bar{j}}^{\bar{a}}Y_{\bar{a}}^{\bar{h}}$, it follows
that 
\begin{equation*}
N_{k}^{\alpha }=\rho _{i}^{\alpha }g^{\bar{h}i}\frac{\partial ^{2}L}{%
\partial z^{k}\partial \bar{\eta}^{h}}+\rho _{i}^{\alpha }u^{\gamma }\dfrac{%
\partial \rho _{\gamma }^{i}}{\partial z^{k}}-\rho _{i}^{\alpha }Y_{\bar{j}%
}^{\bar{a}}Y_{\bar{a}}^{\bar{h}}g^{\bar{j}i}\left( \dfrac{\partial ^{2}L}{%
\partial z^{k}\partial \bar{\eta}^{h}}+u^{\gamma }\dfrac{\partial
\rho_{\gamma }^{l}}{\partial z^{k}}g_{l\bar{h}}\right).
\end{equation*}
Now, using \eqref{inv.2}, we obtain $N_{k}^{\alpha }=\rho _{i}^{\alpha
}N_{k}^{\alpha }=\rho _{i}^{\alpha }\left( N_{k}^{i}+u^{\gamma }\dfrac{%
\partial \rho_{\gamma }^{i}}{\partial z^{k}}\right)$, which, compared to %
\eqref{c.n. ind}, gives the following

\begin{proposition}
The nonlinear connection induced on $E$ by the Chern-Lagrange given by %
\eqref{C-L} from $T^{\prime }M$, coincides with \eqref{C-L ind}.
\end{proposition}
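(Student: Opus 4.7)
The plan is to verify the proposition by direct computation, essentially by unfolding the definition \eqref{C-L ind} and matching it against the formula \eqref{c.n. ind} specialized to the Chern--Lagrange connection \eqref{C-L}. The target identity to establish is
\[
N_k^\alpha \;=\; \rho_i^\alpha\left(N_k^i + u^\gamma\dfrac{\partial \rho_\gamma^i}{\partial z^k}\right),
\]
since this is exactly what \eqref{c.n. ind} gives when $N_k^i$ is the Chern--Lagrange connection on $T'M$.

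The first step is to expand the right-hand side of \eqref{C-L ind}. Using $L^{\ast}(z,u) = L(z,\eta(u))$ with $\eta^k = \rho_\alpha^k u^\alpha$, the relation $g_{\alpha\bar\beta} = \rho_\alpha^i \rho_{\bar\beta}^{\bar j} g_{i\bar j}$ between the two metric tensors, and the expressions \eqref{act.apl.tg.} for $\partial^\ast/\partial z^k$ and $\partial^\ast/\partial u^\alpha$. Here the crucial simplification is that $\rho$ is holomorphic, so $\partial \rho_{\bar\beta}^{\bar h}/\partial z^k = 0$, which eliminates half of the chain-rule terms. This reduces $N_k^\alpha$ to
\[
\rho_i^\alpha \rho_{\bar j}^{\bar\beta} \rho_{\bar\beta}^{\bar h} g^{\bar j i}\left(\dfrac{\partial^2 L}{\partial z^k \partial \bar\eta^h} + u^\gamma \dfrac{\partial \rho_\gamma^l}{\partial z^k} g_{l\bar h}\right),
\]
as in the preceding computation.

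The next step is the contraction $\rho_{\bar j}^{\bar\beta}\rho_{\bar\beta}^{\bar h} = \delta_{\bar j}^{\bar h} - Y_{\bar j}^{\bar a} Y_{\bar a}^{\bar h}$ from \eqref{inv}, which splits the expression into a ``tangential'' part (giving the $\delta^{\bar h}_{\bar j}$ piece) and a ``normal'' part (giving the $Y Y$ piece). The tangential part reproduces exactly $\rho_i^\alpha g^{\bar h i}\partial^2 L/\partial z^k \partial \bar\eta^h + \rho_i^\alpha u^\gamma \partial\rho_\gamma^i/\partial z^k$, which is what we want.

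The main obstacle is therefore showing that the normal part vanishes. For this I would use the orthogonality relations \eqref{inv.2}, namely $g^{\bar j k} Y_k^a \rho_{\bar j}^{\bar\alpha} = 0$, together with $\rho_i^\alpha Y_a^i = 0$ from \eqref{inv}. Applied to the $Y Y g^{\bar j i}$ factor multiplying both terms in parentheses, these relations force the normal contribution to be zero (the $\partial^2 L/\partial z^k \partial \bar\eta^h$ term is killed by $Y_{\bar a}^{\bar h} g^{\bar j i}$ contracted with $\rho_i^\alpha$, and the $g_{l\bar h}$ term reduces by $Y_{\bar a}^{\bar h} g_{l\bar h}$ to an expression orthogonal to $\rho_i^\alpha$). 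Once this vanishing is established, the identity matches \eqref{c.n. ind} with $N_h^k = g^{\bar j k}\partial^2 L/\partial z^h \partial\bar\eta^j$, completing the proof.
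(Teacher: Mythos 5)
Your proposal is correct and follows essentially the same route as the paper: the paper's proof is exactly the displayed computation preceding the proposition, which expands \eqref{C-L ind} using the starred derivatives \eqref{act.apl.tg.} and the holomorphy of $\rho$, inserts $\rho_{\bar j}^{\bar\beta}\rho_{\bar\beta}^{\bar h}=\delta_{\bar j}^{\bar h}-Y_{\bar j}^{\bar a}Y_{\bar a}^{\bar h}$, and kills the normal part via \eqref{inv.2} before comparing with \eqref{c.n. ind}. The only cosmetic difference is in the last step: the entire normal term vanishes at once because of the single common prefactor $\rho_i^\alpha g^{\bar j i}Y_{\bar j}^{\bar a}=0$ (the conjugate of the first relation in \eqref{inv.2}), rather than through the two separate cancellations you sketch.
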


\medskip \textbf{III)} The case when $\opn{rank}\rho=n<m$. \medskip

In this case, $\rho $ is a submersion and $\rho(E)$ can be identified with $%
T^{\prime }M$. We will introduce a Lagrange structure on $T^{\prime }M$
induced by a Lagrange structure on $E$.

Let $L(z,u)$ be a regular Lagrangian on $E$ with the metric tensor $%
g_{\alpha \bar{\beta}}(z,u)=\dfrac{\partial ^{2}L}{\partial
u^{\alpha}\partial \bar{u}^{\beta }}$, with $\det [g_{\alpha \bar{\beta}%
}]\neq 0$, such that $\opn{rank}g_{\alpha \bar{\beta}}=m$. Let $g^{\bar{%
\beta}\alpha }$ be the inverse of the metric tensor, that is, $g^{\bar{\beta}%
\alpha }g_{\gamma \bar{\beta}}=\delta _{\gamma }^{\alpha }$.

On $T^{\prime }M$ we have the induced frame \eqref{act.apl.tg.} and its
coframe \eqref{dual}. The Lagrangian $L(z,u)$ will be mapped by $\rho $ in 
$L^{\ast }(z,\eta ^{k}=\rho _{\alpha }^{k}u^{\alpha })$ and using %
\eqref{act.apl.tg.} we compute 
\begin{equation*}
g_{\alpha \bar{\beta}}^{\ast } = \dfrac{\partial ^{\ast 2}L^{\ast }}{%
\partial u^{\alpha }\partial \bar{u}^{\beta }}=\rho _{\alpha }^{i}\rho _{%
\bar{\beta}}^{\bar{j}}g_{i\bar{j}},\quad\mbox{where}\quad g_{i\bar{j}%
}(z,\eta ) = \dfrac{\partial ^{2}L^{\ast }}{\partial \eta ^{i}\partial \bar{%
\eta}^{j}}.
\end{equation*}
We obtain $\opn{rank}[g_{i\bar{j}}]=n$, since $\opn{rank}\rho=n$.

On $E$, we define the following $n$ vertical, linear independent forms
\begin{equation*}
dv^{k}=\rho _{\alpha }^{k}du^{\alpha }.
\end{equation*}

They form an $n$-dimensional distribution, denoted by $\rho
^{\-1}T^{\prime\ast }M$, of the $m$-dimensional distribution $V^{\prime \ast
}E$, and, according to \eqref{act.apl.tg.}, are linked with their image
through $\rho _{\ast }$ on $T^{\prime \ast }M$ by 
\begin{equation}
dz^{k}=dz^{\ast k}\ ;\quad dv^{k}=d^{\ast }\eta ^{k}-u^{\alpha }\dfrac{%
\partial \rho _{\alpha }^{k}}{\partial z^{h}}dz^{h},  \label{cob. ind.}
\end{equation}
where $\eta ^{k}=\rho _{\alpha }^{k}u^{\alpha }$.

We now complete $\{dv^{k}\}_{k=\overline{1,n}}$ to a cobasis of $V^{\prime
\ast }E$ with $\{dy^{a}=Y_{\alpha }^{a}du^{\alpha }\}_{a=\overline{1,m-n}}$.
We obtain the matrix of this frame, $R=\left[ \rho _{\alpha }^{k}\;;\
Y_{\alpha }^{a}\right]^{t}$. If $G_{E}=g_{\alpha \bar{\beta}}du^{\alpha
}\otimes d\bar{u}^{\beta }$ is a metric in the vertical bundle, using the
isomorphism of the tangent and cotangent spaces, let $G_{E}^{-1}=g^{\bar{%
\beta}\alpha }\dfrac{\partial }{\partial u^{\alpha }}\otimes \dfrac{\partial 
}{\partial \bar{u}^{\alpha }}$ be its action on $V^{\prime \ast }E$.

Using the same idea as in the second case, we impose that $\{dy^{a}\}$ are
normal forms with respect to $G_{E}^{-1}$ on $\rho ^{\-1}T^{\prime \ast }M$
and also that they are orthonormal, that is, 
\begin{equation*}
g^{\bar{\beta}\alpha }Y_{\alpha }^{a}\rho _{\bar{\beta}}^{\bar{k}}=g^{\bar{%
\beta}\alpha }\rho _{\alpha }^{k}Y_{\bar{\alpha}}^{\bar{\beta}}=0\quad %
\mbox{and}\quad g^{\bar{\beta}\alpha }Y_{\alpha }^{a}Y_{\bar{\beta}}^{\bar{b}%
}=\delta ^{ab}.
\end{equation*}

We have thus obtained a different coframe from the natural one, $%
\{dv^{k},dy^{a}\}$ on $V^{\prime \ast }E$. Let $R=\left[ \rho _{k}^{\alpha
}\;;\ Y_{a}^{\alpha }\right]$ be the inverse matrix of $R$, 
\begin{equation*}
\rho _{k}^{\alpha }\rho _{\alpha }^{h}=\delta _{k}^{h}\ ;\quad
\rho_{k}^{\alpha }Y_{\alpha }^{a}=0\ ;\quad Y_{\alpha }^{k}Y_{a}^{\alpha
}=0\ ;\quad \rho _{\alpha }^{i}\rho _{i}^{\beta }+Y_{\alpha
}^{a}Y_{a}^{\beta }=\delta_{\alpha }^{\beta }.
\end{equation*}

The first identity yields that $\bigg\{\dfrac{\partial }{\partial z^{k}},%
\dfrac{\partial }{\partial v^{k}}:=\rho _{k}^{\alpha }\dfrac{\partial }{%
\partial u^{\alpha }}\bigg\}$ is the dual frame of $\{dz^{k},dv^{k}\}$ on $%
\rho^{\-1}T^{\prime \ast }M$, since $dz^{k}\bigg(\dfrac{\partial }{\partial
z^{h}}\bigg)=\delta _{h}^{k}$. Indeed, using \eqref{cob. ind.}, we have $%
dz^{k}\bigg(\dfrac{\partial }{\partial z^{h}}\bigg)=0$, since $d^{\ast }\eta
^{k}\bigg(\dfrac{\partial }{\partial z^{h}}\bigg)=u^{\alpha }\dfrac{\partial
\rho _{\alpha }^{k}}{\partial z^{h}}$. Also, $dv^{k}\bigg(\dfrac{\partial }{%
\partial v^{h}}\bigg)=\rho _{h}^{\alpha }\rho _{\alpha }^{k}=\delta _{h}^{k}$
and $dv^{k}\bigg(\dfrac{\partial }{\partial z^{k}}\bigg)=0$.

From \eqref{act.apl.tg.}, we have 
\begin{align*}
\rho _{\ast }\bigg(\dfrac{\partial }{\partial z^{k}}\bigg) &=: \dfrac{\partial ^{\ast }%
}{\partial z^{k}}=\dfrac{\partial }{\partial z^{k}}+u^{\alpha }\dfrac{%
\partial \rho _{\alpha }^{h}}{\partial z^{k}}\dfrac{\partial }{\partial
\eta^{h}} \\
\rho _{\ast }\bigg(\dfrac{\partial }{\partial v^{k}}\bigg) &=: \dfrac{\partial ^{\ast }%
}{\partial v^{k}}=\rho _{k}^{\alpha }\dfrac{\partial ^{\ast }}{\partial
u^{\alpha }}=\rho _{k}^{\alpha }\rho _{\alpha }^{h}\dfrac{\partial }{%
\partial \eta ^{h}}=\dfrac{\partial }{\partial \eta ^{k}}
\end{align*}
Thus, 
\begin{equation}
g_{i\bar{j}}(z,\eta )=\frac{\partial ^{2}L^{\ast }}{\partial \eta
^{i}\partial \bar{\eta}^{j}}=\rho _{i}^{\alpha }\rho _{\bar{j}}^{\bar{\beta}}%
\frac{\partial ^{\ast 2}L^{\ast }}{\partial u^{\alpha }\partial \bar{u}%
^{\beta }}=\frac{\partial ^{\ast 2}L^{\ast }}{\partial v^{i}\partial \bar{v}%
^{j}}:=\left( g_{i\bar{j}}(z,u)\right) ^{\ast }  \label{gij}
\end{equation}
where $\eta ^{k}=\rho _{\alpha }^{k}u^{\alpha }$.

Let $N_{k}^{\alpha }(z,u)$ be a nonlinear connection on $E$ and $%
N_{k}^{h}(z,\eta )$ a nonlinear connection on $T^{\prime }M$, with the
adapted cobases $\delta u^{\alpha }=du^{\alpha}+N_{h}^{\alpha }dz^{h}$ and $%
\delta \eta ^{k}=d\eta^{k}+N_{h}^{k}dz^{h}$, respectively. Let $(z^{k},\eta
^{k}=\rho _{\alpha }^{k}u^{\alpha })$ be the coordinates induced on $%
T^{\prime }M$ by $\rho $ and $\delta^{\ast }\eta ^{k}=d^{\ast }\eta
^{k}+N_{h}^{k}(z,\rho _{\alpha}^{k}u^{\alpha })dz^{h}$.

\begin{definition}
The nonlinear connection $N_{h}^{k}(z,\eta )$ is called induced on $%
\rho(E)\equiv T^{\prime }M$ if 
\begin{equation}
\delta ^{\ast }\eta ^{k}=\rho _{\alpha }^{k}\delta u^{\alpha }.
\label{III.c.n}
\end{equation}
\end{definition}

Using \eqref{cob. ind.}, we obtain $dv^{k}+u^{\alpha }\dfrac{\partial \rho
_{\alpha }^{k}}{\partial z^{h}}dz^{h}+N_{h}^{k}(z,\rho_{\alpha
}^{k}u^{\alpha })dz^{h}=\rho _{\alpha }^{k}\left( du^{\alpha}+N_{h}^{\alpha
}dz^{h}\right)$. With $dv^{k}=\rho _{\alpha }^{k}du^{\alpha}$, this yields

\begin{proposition}
The nonlinear connection $N^k_h(z,\eta)$ is induced on $\rho (E)$ by the
nonlinear connection $N_{k}^{\alpha }(z,u)$ from $E$ if and only if 
\begin{equation}
N_{h}^{k}(z,\eta )=\rho _{\alpha }^{k}N_{h}^{\alpha }(z,u)-u^{\alpha }\dfrac{%
\partial \rho _{\alpha }^{k}}{\partial z^{h}},  \label{III.cn.ind}
\end{equation}
where $\eta ^{k}=\rho_{\alpha }^{k}u^{\alpha }$.
\end{proposition}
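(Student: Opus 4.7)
The plan is to proceed by a direct expansion of the defining equation $\delta^{\ast}\eta^{k} = \rho_{\alpha}^{k}\delta u^{\alpha}$ using the formulas already assembled earlier in this subsection. The whole proposition is essentially a bookkeeping exercise in the adapted coframes, and since every step is reversible, the same calculation will deliver both directions of the ``if and only if'' at once.

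First, I would write out the left-hand side using the definition $\delta^{\ast}\eta^{k} = d^{\ast}\eta^{k} + N_{h}^{k}(z,\rho_{\alpha}^{k}u^{\alpha})dz^{h}$, and then substitute the expression for $d^{\ast}\eta^{k}$ coming from \eqref{cob. ind.}, namely $d^{\ast}\eta^{k} = dv^{k} + u^{\alpha}\dfrac{\partial \rho_{\alpha}^{k}}{\partial z^{h}}dz^{h} = \rho_{\alpha}^{k}du^{\alpha} + u^{\alpha}\dfrac{\partial \rho_{\alpha}^{k}}{\partial z^{h}}dz^{h}$, where I use $dv^{k} = \rho_{\alpha}^{k}du^{\alpha}$. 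This yields
\begin{equation*}
\delta^{\ast}\eta^{k} = \rho_{\alpha}^{k}du^{\alpha} + \Bigl(u^{\alpha}\dfrac{\partial \rho_{\alpha}^{k}}{\partial z^{h}} + N_{h}^{k}\Bigr)dz^{h}.
\end{equation*}

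Next, I would expand the right-hand side: $\rho_{\alpha}^{k}\delta u^{\alpha} = \rho_{\alpha}^{k}du^{\alpha} + \rho_{\alpha}^{k}N_{h}^{\alpha}dz^{h}$. Equating the two expressions, the $du^{\alpha}$ terms cancel identically, so the defining relation \eqref{III.c.n} is equivalent to the equality of the coefficients of the linearly independent forms $dz^{h}$, which is precisely \eqref{III.cn.ind}. Since $dz^{h}$ and $du^{\alpha}$ form a cobasis on $E$, matching coefficients is indeed equivalent to equality of the two forms, so the computation gives both implications at once.

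I do not foresee any real obstacle: the main thing to verify carefully is the consistent use of $\eta^{k} = \rho_{\alpha}^{k}u^{\alpha}$ when interpreting $N_{h}^{k}$ pulled back to $E$, and the identification $dv^{k} = \rho_{\alpha}^{k}du^{\alpha}$ coming from the definition of the induced forms. Once those are in place, the algebra is two lines.
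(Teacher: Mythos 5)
Your proposal is correct and follows essentially the same route as the paper: the authors likewise substitute \eqref{cob. ind.} and $dv^{k}=\rho_{\alpha}^{k}du^{\alpha}$ into the defining relation \eqref{III.c.n}, cancel the $du^{\alpha}$ terms, and read off \eqref{III.cn.ind} from the $dz^{h}$ coefficients, with the equivalence coming from the reversibility of each step.
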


We now compute $\dfrac{\delta ^{\ast }}{\delta z^{k}}=\dfrac{\partial ^{\ast
}}{\partial z^{k}}-N_{k}^{\alpha }\dfrac{\partial ^{\ast }}{\partial
u^{\alpha }} =\dfrac{\partial }{\partial z^{k}}+u^{\alpha }\dfrac{\partial
\rho _{\alpha}^{h}}{\partial z^{k}}\dfrac{\partial }{\partial \eta ^{h}}%
-N_{k}^{\alpha}\rho _{\alpha }^{h}\dfrac{\partial }{\partial \eta ^{h}}$ and
replacing $N_{k}^{\alpha }\rho _{\alpha }^{h}$ from \eqref{III.cn.ind} gives 
$\dfrac{\delta ^{\ast }}{\delta z^{k}}=\dfrac{\delta }{\delta z^{k}}$ on $%
T^{\prime}M$.

Also, using \eqref{dual}, $\delta \eta ^{k}=d\eta
^{k}+N_{h}^{k}dz^{h}=d^{\ast }\eta^{k}+\left( \rho _{\alpha
}^{k}N_{h}^{\alpha }(z,u)-u^{\alpha }\dfrac{\partial \rho _{\alpha }^{k}}{%
\partial z^{h}}\right) dz^{h} =\rho _{\alpha}^{k}\delta u^{\alpha }=\delta
^{\ast }\eta ^{k}$.

\begin{proposition}
The adapted basis and cobasis of the nonlinear connection $N_{k}^{h}(z,\eta)$
induced by $N_{k}^{\alpha }(z,u)$ define the adapted frame and coframe on $%
T^{\prime }M$ given by 
\begin{align*}
\dfrac{\delta }{\delta z^{k}} &= \dfrac{\delta ^{\ast }}{\delta z^{k}}\
;\quad \dfrac{\partial }{\partial \eta ^{k}}=\rho _{k}^{\alpha }\dfrac{%
\partial ^{\ast }}{\partial u^{\alpha }}=\dfrac{\partial ^{\ast }}{\partial
v^{k}} \\
dz^{k} &= d^{\ast }z^{k}\ ;\quad \delta \eta ^{k}=\rho _{\alpha }^{k}\delta
u^{\alpha }=\delta ^{\ast }\eta ^{k}.
\end{align*}
\end{proposition}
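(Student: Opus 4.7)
The proposition is essentially a compilation result: most of the identities already appear in the paragraphs immediately preceding its statement, so my plan is to verify each of the four equalities in turn by direct substitution, using only \eqref{act.apl.tg.}, \eqref{dual}, the definition \eqref{III.cn.ind} of the induced connection, and the inverse-matrix relations $\rho_k^\alpha\rho_\alpha^h=\delta_k^h$.

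I would dispatch the two easy items first. The identity $dz^k=d^{\ast}z^k$ is immediate from the first line of \eqref{dual}. For the vertical frame, I compute
\[
\dfrac{\partial^{\ast}}{\partial v^k}
=\rho_k^\alpha\dfrac{\partial^{\ast}}{\partial u^\alpha}
=\rho_k^\alpha\rho_\alpha^h\dfrac{\partial}{\partial\eta^h}
=\delta_k^h\dfrac{\partial}{\partial\eta^h}=\dfrac{\partial}{\partial\eta^k},
\]
using \eqref{act.apl.tg.} and the inverse-matrix identity from the construction of $(\rho_k^\alpha,Y_a^\alpha)$.

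For the horizontal frame, I expand $\dfrac{\delta^{\ast}}{\delta z^k}=\dfrac{\partial^{\ast}}{\partial z^k}-N_k^\alpha\dfrac{\partial^{\ast}}{\partial u^\alpha}$ via \eqref{act.apl.tg.}, which gives $\dfrac{\partial}{\partial z^k}+u^\alpha\dfrac{\partial\rho_\alpha^h}{\partial z^k}\dfrac{\partial}{\partial\eta^h}-N_k^\alpha\rho_\alpha^h\dfrac{\partial}{\partial\eta^h}$; then substituting the contracted form of \eqref{III.cn.ind}, namely $\rho_\alpha^h N_k^\alpha=N_k^h+u^\alpha\dfrac{\partial\rho_\alpha^h}{\partial z^k}$, makes the $u^\alpha\dfrac{\partial\rho_\alpha^h}{\partial z^k}$ terms cancel, leaving $\dfrac{\partial}{\partial z^k}-N_k^h\dfrac{\partial}{\partial\eta^h}=\dfrac{\delta}{\delta z^k}$.

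Finally, for the coframe identity I start from $\delta^{\ast}\eta^k=d^{\ast}\eta^k+N_h^k\,dz^h$ and insert \eqref{dual}, obtaining $u^\alpha\dfrac{\partial\rho_\alpha^k}{\partial z^h}dz^h+\rho_\alpha^k du^\alpha+N_h^k dz^h$; substituting $N_h^k$ from \eqref{III.cn.ind} cancels the $u^\alpha\dfrac{\partial\rho_\alpha^k}{\partial z^h}dz^h$ term and leaves $\rho_\alpha^k(du^\alpha+N_h^\alpha dz^h)=\rho_\alpha^k\delta u^\alpha$. The remaining equality $\delta\eta^k=\delta^{\ast}\eta^k$ then follows because on $\rho(E)\equiv T^{\prime}M$ we have $\eta^k=\rho_\alpha^k u^\alpha$ and $d\eta^k=d^{\ast}\eta^k$ by construction. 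There is no real obstacle here; the only thing to be careful about is consistently keeping track of which objects live on $E$ and which on $T^{\prime}M$, and systematically using the identification $\eta^k=\rho_\alpha^k u^\alpha$ whenever a term on one side needs to be recognised on the other.
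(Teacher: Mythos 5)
Your proof is correct and follows essentially the same route as the paper: the paper establishes $\partial^{\ast}/\partial v^{k}=\partial/\partial\eta^{k}$, $\delta^{\ast}/\delta z^{k}=\delta/\delta z^{k}$ and $\delta\eta^{k}=\rho_{\alpha}^{k}\delta u^{\alpha}=\delta^{\ast}\eta^{k}$ by exactly the substitutions you perform, in the computations immediately preceding the statement, and the proposition is then read off as a summary. No gaps; the only cosmetic difference is that you derive the coframe identity starting from $\delta^{\ast}\eta^{k}$ while the paper starts from $\delta\eta^{k}$, which is the same calculation read in the other direction.
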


\begin{proposition}
The Chern-Lagrange nonlinear connection from $E$ induces the Chern-Lagrange
connection on $T^{\prime }M$.
\end{proposition}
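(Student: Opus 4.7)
The plan is to substitute the Chern--Lagrange coefficients on $E$, namely $N_h^\alpha = g^{\bar\beta\alpha}\partial^2 L/(\partial z^h\partial\bar u^\beta)$, into the induction formula \eqref{III.cn.ind} and to recognize the resulting expression on $T^{\prime }M$ as the Chern--Lagrange connection \eqref{C-L}. Concretely, what must be checked is the equality
\[
\rho_\alpha^k g^{\bar\beta\alpha}\frac{\partial^2 L}{\partial z^h\partial \bar u^\beta} - u^\alpha\frac{\partial\rho_\alpha^k}{\partial z^h} = g^{\bar j k}\frac{\partial^2 L^*}{\partial z^h\partial\bar\eta^j},
\]
with $\eta^k=\rho_\alpha^k u^\alpha$.

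First I would use the chain rule, applied to the identification $L(z,u)=L^*(z,\rho(z)u)$, to express the left-hand side in terms of derivatives of $L^*$. Because $\rho$ is holomorphic, $\partial\rho_{\bar\beta}^{\bar k}/\partial z^h = 0$, so differentiating $\partial L/\partial\bar u^\beta = \rho_{\bar\beta}^{\bar k}\partial L^*/\partial\bar\eta^k$ in $z^h$ and accounting for the dependence $\eta^j = \rho_\alpha^j u^\alpha$ gives
\[
\frac{\partial^2 L}{\partial z^h\partial\bar u^\beta} = \rho_{\bar\beta}^{\bar p}\left(\frac{\partial^2 L^*}{\partial z^h\partial\bar\eta^p} + u^\gamma\frac{\partial\rho_\gamma^j}{\partial z^h} g_{j\bar p}\right).
\]

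The next step is the contraction $\rho_\alpha^k g^{\bar\beta\alpha}\rho_{\bar\beta}^{\bar p}$; the argument hinges on the identity
\[
\rho_\alpha^k g^{\bar\beta\alpha}\rho_{\bar\beta}^{\bar p} = g^{\bar p k},
\]
which is the Case III analogue of \eqref{inv.2}. It follows from the completion relation $\rho_\alpha^i\rho_i^\beta + Y_\alpha^a Y_a^\beta = \delta_\alpha^\beta$ combined with the orthogonality $g^{\bar\beta\alpha}Y_\alpha^a\rho_{\bar\beta}^{\bar k} = 0$ imposed on the complement frame $\{Y_\alpha^a\}$ in the Case III set-up: the $Y$-contributions in $\delta_\alpha^\beta$ are killed by that orthogonality, leaving only the $\rho$-block, which by construction represents $g^{\bar p k}$. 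Establishing this identity precisely is the main technical step, and it parallels the reasoning behind \eqref{inv.2}.

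With this identity available, substitution into \eqref{III.cn.ind} produces
\[
g^{\bar p k}\frac{\partial^2 L^*}{\partial z^h\partial\bar\eta^p} + g^{\bar p k}g_{j\bar p}\, u^\gamma\frac{\partial\rho_\gamma^j}{\partial z^h} - u^\alpha\frac{\partial\rho_\alpha^k}{\partial z^h}.
\]
Since $g^{\bar p k}g_{j\bar p} = \delta_j^k$, the middle and last terms cancel, leaving precisely the Chern--Lagrange coefficient $g^{\bar p k}\partial^2 L^*/(\partial z^h\partial\bar\eta^p)$ of \eqref{C-L} on $T^{\prime }M$, which is the desired conclusion.
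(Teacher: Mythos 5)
Your overall strategy --- substitute the Chern--Lagrange coefficients of $E$ into \eqref{III.cn.ind}, expand by the chain rule, contract with $\rho_\alpha^k g^{\bar\beta\alpha}$, and watch the inhomogeneous terms cancel --- is exactly the mirror image of the paper's Case II computation, which is all the paper itself offers here: its proof is the single sentence that the claim ``follows in a similar manner as in the second case.'' So the route is the intended one.

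There is, however, a genuine inconsistency between your two key steps. The chain rule you invoke, $\partial L/\partial\bar u^\beta=\rho_{\bar\beta}^{\bar k}\,\partial L^{\ast}/\partial\bar\eta^k$, presupposes $L(z,u)=L^{\ast}(z,\rho(z)u)$; but then $g_{\alpha\bar\beta}=\partial^2L/\partial u^\alpha\partial\bar u^\beta=\rho_\alpha^i\rho_{\bar\beta}^{\bar j}g_{i\bar j}$ has rank at most $n<m$, so the inverse $g^{\bar\beta\alpha}$ required by your contraction and by the identity $\rho_\alpha^kg^{\bar\beta\alpha}\rho_{\bar\beta}^{\bar j}=g^{\bar jk}$ does not exist. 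This tension is partly inherited from the paper's own Case III set-up (which assumes $\det[g_{\alpha\bar\beta}]\neq0$ while calling $L^{\ast}$ the image of $L$ under the submersion $\rho$), but your argument as written rests on two mutually exclusive hypotheses. The repair, consistent with \eqref{gij} and \eqref{cob. ind.}, is to work with the restriction of $g$ to the distribution $\rho^{-1}T^{\prime\ast}M$: take $g_{i\bar j}=\partial^{\ast2}L^{\ast}/\partial v^i\partial\bar v^j=\rho_i^\alpha\rho_{\bar j}^{\bar\beta}g_{\alpha\bar\beta}$ with $\partial/\partial v^k=\rho_k^\alpha\,\partial/\partial u^\alpha$, and then prove $g^{\bar jk}=\rho_\alpha^k\rho_{\bar\beta}^{\bar j}g^{\bar\beta\alpha}$. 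That identity, which you correctly flag as the main technical step but leave unestablished, does not follow from the completion relation $\rho_\alpha^i\rho_i^\beta+Y_\alpha^aY_a^\beta=\delta_\alpha^\beta$ alone: after inserting $\rho_{\bar\beta}^{\bar j}\rho_{\bar j}^{\bar\delta}=\delta_{\bar\beta}^{\bar\delta}-Y_{\bar\beta}^{\bar a}Y_{\bar a}^{\bar\delta}$ one needs the conjugate orthogonality $g^{\bar\beta\alpha}\rho_\alpha^kY_{\bar\beta}^{\bar b}=0$ to kill the $Y$-block. With these corrections the final cancellation you describe does go through, but as it stands the proof has a real gap.
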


\begin{proof}
It follows in a similar manner an in the second case.
\end{proof}


\begin{thebibliography}{99}
\bibitem{A} T. Aikou, \emph{Finsler geometry on complex vector bundles},
Riemann Finsler Geometry, MSRI Publications, 50, p. 85-107, 2004.

\bibitem{A1} M. Anastasiei, \emph{Geometry of Lagrangians and semisprays on
Lie algebroids}, BSG Proc., 13, p. 10-17, Geom. Balkan Press, Bucharest,
2006.

\bibitem{A2} M. Anastasiei, \emph{Semisprays On Lie Algebroids. Applications}%
, Tensor (N.S.) 69, p. 190-198, 2008.

\bibitem{B-K} J. Bland, M. Kalka, \emph{Variations of holomorphic curvature
for Kahler Finsler metrics} Cont. Math, 1996, 196, p. 121--132.

\bibitem{I-Po} C. Ida, P. Popescu, \emph{On Almost Complex Lie Algebroids},
Mediterr. J. Math. DOI 10.1007/s00009-015-0516-4, 2015.

\bibitem{I} A. Ionescu, \emph{On holomorphic Lie algebroids}, Bulletin of
Transilvania Univ., Vol 9(58), No. 1 - 2016, to appear.

\bibitem{L-S-X} C. Laurent-Gengoux, M. Sti\' enon, P. Xu, \emph{Holomorphic
Poisson manifolds and holomorphic Lie algebroids}, Int. Math. Res. Not IMRN, 
ID:rnn088, \textbf{46}, 2008, arXiv:0803.2031.

\bibitem{Ma1} E. Martinez, \emph{Lagrangian mechanics on Lie algebroids},
Acta Applicandae Mathematicae, 67, p. 295-320, 2001.

\bibitem{Ma2} E. Martinez, \emph{Geometric formulation of mechanics on Lie
algebroids}, Proc. of the VIIIth Workshop on Geometry and Physics (Medina
del Campo, 1999), vol. 2 of Publ. R. Soc. Mat. Esp., p. 209-222, 2001.

\bibitem{Mu} Gh. Munteanu, \emph{Complex spaces in Finsler, Lagrange and
Hamilton geometries}, Kluwer Acad. Publishers, Dordrecht, 2004.

\bibitem{P} E. Peyghan, \emph{Models of Finsler Geometry on Lie algebroids},
arXiv:1310.7393v1, 2013.

\bibitem{P1} L. Popescu, \emph{On the geometry of Lie algebroids and
applications to optimal control}, An. \c St. ale Univ. "Al. I. Cuza", Ia\c
si, Vol. LI, s. I, 2005.

\bibitem{P2} L. Popescu, \emph{Geometrical structures on Lie algebroids},
Publ. Math. Debrecen 72 (1-2), p. 95-109, 2008.

\bibitem{Po} E. Popovici, \emph{\ On the volume of the indicatrix of a
complex Finsler space}, Turkish J. of Mathematics, to appears, DOI:
10.3906/mat-1510-66.

\bibitem{W} A. Weinstein, \emph{Lagrangian mechanics and grupoids}, Fields
Inst. Comm., 7, p. 206-231, 1996.

\bibitem{W1} A. Weinstein, \emph{The integration problem for complex Lie
algebroids}, arXiv:math/0601752, 2006.
\end{thebibliography}
\end{document}